\newtheorem{lemma}{Lemma}
\newtheorem{definition}{Definition}
\newtheorem{example}{Example}
\begin{document}

{\bf A dependent complex degrading system with non-periodic inspection times}

Inma T. Castro, Department of Mathematics, University of Extremadura, Spain \\
Luis Landesa, Escuela Politécnica, University of Extremadura, Spain

\begin{abstract}
This paper analyses a system subject to multiple dependent degradation processes. Degradation processes start at random times following a non homogeneous Poisson process and next dependently propagate. The growth of these degradation processes is modeled using gamma increments. We assume that the arrival of a new process to the system triggers the degradation rate of the processes present in the system. 
Under this framework, the analytic expression of the system reliability is obtained and bounds of the system reliability are also analyzed. 
Furthermore, the system is inspected at certain times. Information on the system health is recorded at these inspection times and the decision on performing maintenance actions on the system is taken at these times. We consider in this paper a dynamic inspection policy since the information that becomes available in an inspection time is taken into account to schedule the next inspection time. The maintenance cost for this system is dealt with the use of semi-regenerative process. Numerical examples are performed to illustrate the analytic expressions. 
\end{abstract}

{\it Keywords}:  Condition based maintenance, non periodic inspection times, gamma process, semi-regenerative process.

\section{Introduction}
Industrial components can be subject to a corrosion process that involves several competing sources of degradation. A classical example is the pitting corrosion process. Pitting corrosion is defined as localized corrosion of a metal surface, confined to a point or small area that takes the form of cavities and it is considered one of the main causes of structural failure in industrial systems (\cite{Bhandari}). Pits usually initiate at random times and they grow according to the characteristics of the material. 
Pitting corrosion comprises two main processes: the initiation and the growth. Due to its stochastic nature, some probabilistic  models are developed to describe this corrosion process. Pit initiation can be described using a counting process such as a Poisson process. Pits growth is modelled using stochastic processes such as Markovian processes (\cite{Huynh2012}) or gamma processes (\cite{Zhu}). The stochastic modeling of Stress Corrosion Cracking (SCC) has been also analyzed under this double scheme of initiation and growth processes. SCC is a corrosion mechanism that forms cracks due to the combined influence of tensile stress and aggressive environment and it can be found in different components of nuclear power plants (\cite{Priya}).

In this paper, a system subject to several sources of degradation (defects) which appear at random times and next propagate is analyzed. This general double scheme appears in many practical cases such as Stress Corrosion Cracking (SCC) or pitting corrosion phenomenons. Depending on the characteristics of the system, different stochastic processes for the initiation and for the propagation processes can be chosen. In this paper, defects appear in the system following a non-homogeneous Poisson process and next propagate or degrade considering gamma increments. We call this probabilistic model as a NHPP-GP model. 
Different practical cases validate the NHPP-GP model. For example, \cite{Kuniewski} and \cite{Velazquez} used a NHPP-GP model to analyze the damage in oil industrial systems. On the other hand, \cite{Bordes} fitted a NHPP-GP model for data from the electricity generator EDF. Applications of the NHPP-GP scheme to model the Stress Corrosion Cracking can be found in \cite{Shafiee} for a three-bladed rotor system on an offshore wind turbine and in components of nuclear power plants (\cite{Blain} and \cite{Huynh2017} among others).

These previous papers assume that the defects are identical and they propagate independent of the rest of the defects. It could be the case for components in different production units without interference. However, \cite{Crowder} supported that the existence of common shared factors (same operational stresses, wear and tear history, materials quality among others) may indicate the possibility of dependence of the defects.  For example, interaction between adjacent defects has a significant influence on the propagation characteristics of the rest of the defects. Hence it is necessary to develop models capable of taking into account the interaction phenomena and where the interaction causes the defects growth changes. An interesting challenging problem is the study of the dependence for a NHPP-GP model and it is the main objective of this paper.  We call this model a ``dependent NHPP-GP" model. In the sequel of this work, by defects or degradation processes we mean the different competing sources of system degradation.

There are some works that deal with the analysis of the dependence between competing degradation processes. In the earlier work of \cite{Straub}, dynamic Bayesian networks are used to represent the dependencies between defects. In recent years, much attention has been placed on modelling the dependence between defects using copulas (\cite{Zhenyu}). Piecewise-deterministic Markov processes are also used to describe the dependence between degradation processes when physics-based models and multi-state models are used to describe the degradation evolution in certain structures (\cite{Lin}). Other approach to deal with the dependence between defects is to consider that some randomly occurring events can accelerate the degradation rate of the processes present in the system. These events can be, for example, shocks (\cite{Rafiee} and \cite{Song}) or even the system workload \cite{Che}. Following this approach, in this paper, a reliability model is developed for a system experiencing a dependent NHPP-GP scheme by considering the changing degradation rate due to the arrival of different defects to the system. It is motivated by the fact that, when a system is subject to multiple defects, the magnitude of the interaction between them is dependent on the number of defects present in the system (\cite{Kamaya}).

For degrading complex systems, different ways of defining the system failure can be used. For example, \cite{Huynh2017} assumes that the system can be considered as failed when the sum of the degradation levels of all the processes exceeds a predetermined value. In this paper, we assume that the system fails when the degradation level of a process exceeds a failure threshold (\cite{Caballe}). To avoid the system failure, an inspection strategy is implemented and
the decision of maintaining the system is taken on the basis of the observed condition of the system. 
Although monitoring can be performed with negligible times between inspections (\cite{Zhao}), in this paper we assume a discrete monitoring. There are many works in the literature in which the inspections are periodically performed regardless of the system state. The periodic inspection schemes, compared to non-periodic ones, are easier to implement (\cite{Golmakani}) but it can cause higher costs \cite{Barker}. Different models of non-periodic inspection strategies have been developed based on the residual useful life (\cite{Do}) and on the system degradation (\cite{Tai} and \cite{Grall}).  
In this paper, inspection times are scheduled taking into account the number of degradation processes in the system and their degradation levels.

In short,  the main contributions of this paper are the following:
\begin{itemize}
\item Extending the NHPP-GP model assuming dependence between defects. 
\item Analyzing the system reliability under a dependent NHPP-GP model. 
\item Proposing a maintenance policy with non-periodic inspection times. 
\item Building an analytic cost model for the policy using the semi-regenerative process theory. 
\end{itemize}

The paper is structured as follows. Sectiosn \ref{probabilisticmodelling} and \ref{timetofailure} describe the probabilistic model and analyze the system reliability. Some conditional probabilities are obtained in Section \ref{ConditionalProbability}. The non-periodic inspection policy is described in Section \ref{Inspectiontimes}. Sections \ref{ConditionalProbability} and \ref{optimalpolicy} model the system functioning and the maintenance strategy using a semi-regenerative process. Numerical examples are shown in Section \ref{numericalexamples} and Section \ref{conclusions} concludes.

\section{Probabilistic modeling} \label{probabilisticmodelling}

The assumptions of the model are the following. In the sequel of this work, we shall use the terms ``degradation processes'' and ``defects" interchangeably. 

\begin{enumerate}
\item We consider a system subject to different degradation processes or defects. Degradation processes start at random times following a Non Homogeneous Poisson process with intensity $\lambda(t)$. Let $\{N(t), t \geq 0\}$ be the process that governs this NHPP with cumulative function given by
\begin{equation} \label{acumuladalambda}
\Lambda(t)=\int_{0}^{t} \lambda(u)du. 
\end{equation}
Let $S_1, S_2, \ldots $ be the starting points of the NHPP. 
\item Each degradation process evolves according to a given rate that can undergo accelerations under the arrival of more degradation processes to the system. We assume that the growth of each degradation process depends on the number of process in the system.  So, if $S_{i} \leq s < t < S_{i+1}$,  $i \geq 1$, the density of the deterioration increment of the $k$-th degradation process in $(s,t)$ is given by
\begin{equation*}\label{incrementsame}
f_{\alpha c^{i-1}(t-s), \beta}(x)=\frac{\beta^{\alpha c^{i-1}(t-s)}}{\Gamma(\alpha c^{i-1}(t-s))}x^{\alpha c^{i-1}(t-s)-1}e^{-\beta x}, \quad x\geq 0, 
\end{equation*}
for $k=1, 2, \ldots, i$ and $x\geq 0$ and where $\Gamma$ denotes the gamma function defined as
\begin{equation} \label{gamma}
\Gamma(\alpha)=\int_{0}^{\infty} u^{\alpha-1}e^{-u} du. 
\end{equation}
This two stage process (NHPP for the initiation process and Equation (\ref{incrementsame}) for the growth process) is called a dependent NHPP-GP model with parameters $(\lambda(t), \alpha, \beta, c)$. 
\item The system fails when the degradation level of a process exceeds the failure threshold $L$. If the system fails, it stops working.
\item The system is inspected to check its status and, depending on its status, to perform a maintenance task. We assume that the time between inspections should always exceed a time $T_r$. Quantity $T_r$ represents the minimum time necessary to prepare the inspections. This time is used to arrange the tools and other materials necessary for the inspections
\item Inspections are non-sequentially performed in the following way. Let $T_1, T_2, \ldots, T_n$ be the inspection times for the system with $T_1=T$ and $T>0$. In an inspection time $T_i$, the number of degradation processes present in the system $N(T_i)$ and the degradation levels of each process are recorded.  Next inspection is planned taking into account this information. The system degradation at time $T_i$ can be expressed as
\begin{equation} \label{vectorX}
W\left(T_i\right)=\left(W_1(T_i), W_2(T_i), \ldots, W_{N(T_i)}(T_i)\right), 
\end{equation}
where $W_j(T_i)$ denotes the degradation level at time $T_i$ of the process that started at time $S_j$. With this recorded information, next inspection is scheduled at time 
  \begin{equation}
 T_{i+1}=T_i+m(T_i), 
 \end{equation}
where 
\begin{equation} \label{funcionm}
m(T_i)=\max \left( T_r, Tk^{N(T_i)} (1-\max(W(T_i))/M)\right), 
\end{equation}
with $k<1$, $M$ is a degradation level with $M<L$, $T_r$ corresponds to the minimum time between inspections with $T_r < T$ and $W\left(T_i\right)$ is given by Equation (\ref{vectorX}). 
 \item If the system is down in an inspection time, a corrective maintenance is performed and the system is replaced by a new one (corrective replacement). 
 \item In an inspection time $T_i$, if the maximum of the degradation levels of the processes exceeds $M$ but is less than $L$, a preventive replacement of the system is performed. A preventive replacement means the system replacement by a new one.  
 \item In an inspection time $T_i$, if the maximum of the degradation levels of the processes is less than $M$, the system remains in the state just as before the inspection time. 
 \item Inspections are assumed to be instantaneous, perfect and non-destructive.
\end{enumerate}

We recall that $S_1, S_2, \ldots, $ denote the starting points of the degradation processes. 

Let $X_i(t)$ be a gamma distribution with parameters $c^{i-1}\alpha t$ y $\beta$ for $i=1, 2, \ldots$ and $X_i^{(j)}(t)$ be the $j$-th replica of $X_i(t)$. Finally, we denote by $\left\{W_i(t), t \geq 0\right\}$ the degradation level of the process that started at time $S_i$ for $i=1, 2, \ldots$. 

The system degradation evolves as follows. 
\begin{itemize}
\item For $0 \leq t < S_1$, no degradation process is present in the system. 
\item For $S_1 \leq t < S_2$, the system is subject to one degradation process. Let $W_1(t)$ be the degradation level of this process at time $t$. Then
$$W_1(t)=X_1^{(1)}(t-S_1), \quad S_1 \leq t  \leq S_2,   $$
where $X_1^{(1)}(t-S_1)$ follows a gamma distribution with parameters $\alpha(t-S_1)$ and $\beta$. 
\item For $S_2 \leq t < S_3$, the system is subject to two degradation processes (with starting points $S_1$ and $S_2$). Then
\begin{align*}
W_1(t) &= X_1^{(1)}(S_2-S_1)+X_2^{(1)}(t-S_2), \\
W_2(t) &= X_2^{(2)}(t-S_2).  
\end{align*}
For fixed $(S_1, S_2)=(s_1, s_2)$, $X_1^{(1)}(s_2-s_1)$ follows a gamma distribution with parameter $\alpha(s_2-s_1)$ and $\beta$ and $X_2^{(1)}(t-s_2) (X_2^{(2)}(t-s_2))$ follows a gamma distribution with parameter $\alpha c(t-s_2)$ and $\beta$. 
\item For $S_3 \leq t < S_4$, the system is subject to three degradation processes (with starting points $S_1$, $S_2$ and $S_3$). Then
\begin{align*}
W_1(t) &= X_1^{(1)}(S_2-S_1)+X_2^{(1)}(S_3-S_2) + X_3^{(1)}(t-S_3),  \\
W_2(t) &= X_2^{(2)}(S_3-S_2)+X_3^{(2)}(t-S_3),  \\
W_3(t) &= X_3^{(3)}(t-S_3). 
\end{align*}
For fixed $(S_1, S_2, S_3)=(s_1, s_2, s_3)$ we get that $X_1^{(1)}(s_2-s_1)$ follows a gamma distribution with parameters $\alpha(s_2-s_1)$ and $\beta$, $X_2^{(1)}(s_3-s_2) (X_2^{(2)}(s_3-s_2))$ follows a gamma distribution with parameters $\alpha c(s_3-s_2)$ and $\beta$ and $X_3^{(i)}(t-s_3)$ follows a gamma distribution with parameters $\alpha c^2(t-s_3)$ and $\beta$ for $i=1, 2, 3$. Due to the additivity of the gamma distribution, $W_1(t)$ follows a gamma distribution with parameters 
$$\alpha_{1,3,t}=\alpha(s_2-s_1)+\alpha c (s_3-s_2)+\alpha c^2(t-s_3),$$
and $\beta$, $W_2(t)$ follows a gamma distribution with parameters 
$$\alpha_{2,3,t}=\alpha c (s_3-s_2)+\alpha c^2(t-s_3),$$
and $\beta$ and $W_3(t)$ follows a gamma distribution with parameters 
$$\alpha_{3,3,t}=\alpha c^2(t-s_3)$$ and $\beta$. 
\end{itemize}
In a general setting, the overall degradation of the process is given by
$$W(t)=(W_1(t), W_2(t), \ldots, W_{N(t)}(t))$$
where the processes $W_j(t)$ can be expressed as follows
\begin{align} \label{Wj}
W_j(t) &= \sum_{i=j}^{N(t)-1}X_i^{(j)}(S_{i+1}-S_i) + X_{N(t)}^{(j)}(t-S_{N(t)}), \quad 1 \leq j \leq N(t)-1 \\ \label{WN}
W_{N(t)}(t) &= X_{N(t)}^{(N(t))}(t-S_{N(t)}), 
\end{align}
and where $W_j(t)$ follows a gamma distribution with shape parameter
$$\alpha_{j,N(t),t}=\sum_{i=j}^{N(t)-1} \alpha c^{i-1} (S_{i+1}-S_i)+\alpha c^{N(t)-1}(t-S_{N(t)}),$$ 
and scale parameter $\beta$ with $j=1, 2, \ldots, N(t)$ and 
$W_{N(t)}(t)$ follows a gamma distribution with shape parameter
$\alpha_{N(t), N(t),t}=\alpha c^{N(t)-1}$ and scale parameter $\beta$. 
For fixed $N(t)=n$ ($n>1$) and considering a realization $(S_1, S_2, \ldots, S_n)=(s_1, s_2, \ldots, s_n)$ of the arrival process, then $W_j(t)$ given by Equations (\ref{Wj}) and (\ref{WN}) follows a gamma distribution with parameters 
$\alpha_{j,n,t}$ and $\beta$ where
\begin{equation}\label{alphajnt}
\alpha_{j,n,t}=\sum_{z=j}^{n-1} \alpha c^{z-1}(s_{z+1}-s_z)+c^{n-1}\alpha (t-s_n),  
\end{equation}
for $1 \leq j \leq n-1$ and
\begin{equation} \label{alphannt}
\alpha_{n,n,t}=c^{n-1}\alpha (t-s_n). 
\end{equation}
Notice that, if $c=1$, then $W_j(t)$ follows a gamma distribution with parameters 
\begin{align*} 
\alpha_{j,n,t} &=\sum_{z=j}^{n-1} \alpha(s_{z+1}-s_z)+\alpha (t-s_n) =  \alpha (t-s_j) \\ 
\alpha_{n,n,t} &= \alpha (t-s_n), 
\end{align*}
and $\beta$, that is, $\left\{W_j(t), \, t \geq 0\right\}$ is a gamma process with parameters $\alpha$ and $\beta$. 
\begin{example} \label{example1} The operating parameters given by \cite{Blain} are used. A two stage NHPP-GP process describes the Stress Corrosion Cracking of different components of nuclear power plants. Defects arrive at the system following a NHPP with intensity $\lambda=0.0002$ defects per unit time (hours). Growth process is modelled using a homogeneous gamma process with parameters $\alpha=0.0004$ (hours)$^{-1}$ and $\beta=1.5$ (mm)$^{-1}$. To show the characteristics of our model, a dependence parameter is given by $c=1.1$. Notice that a parameter dependence $c=1$ reduces our model to the model given by \cite{Blain}. Figure \ref{complexgamma} shows the length of the cracks versus time 

\begin{figure}
	\begin{center}
		\includegraphics[width=0.7\textwidth]{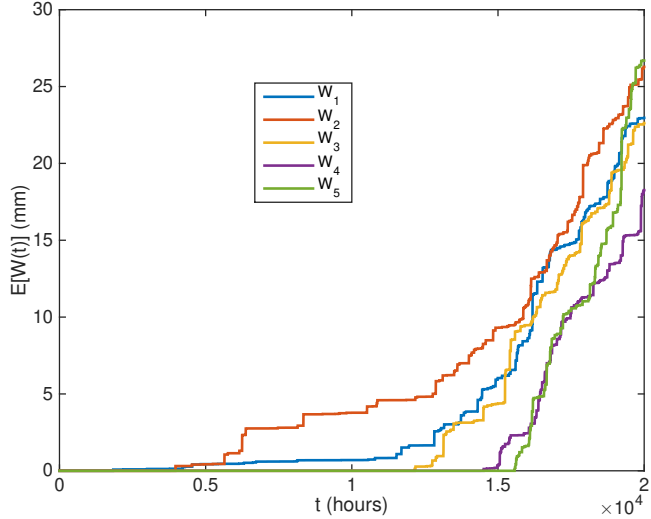}
		\caption{Simulation of a dependent NHPP-GP} \label{complexgamma}
	\end{center}
\end{figure}
\end{example}

In the sequel of this work, we denote by $f_{\alpha_{j,n,t}, \beta}$ ($F_{\alpha_{j,n,t}, \beta}$) the density (distribution) of a gamma distribution with parameters $\alpha_{j,n,t}$ and $\beta$
\begin{equation}\label{incrementsame}
f_{\alpha_{j,n,t}, \beta}(x)=\frac{\beta^{\alpha_{j,n,t}}}{\Gamma(\alpha_{j,n,t})}x^{\alpha_{j,n,t}-1}e^{-\beta x}, \quad x\geq 0, 
\end{equation}
where $\Gamma(\cdot)$ is given by Equation (\ref{gamma}). 

From \cite{Kuniewski}, the joint probability density of $(S_1, S_2, \ldots, S_n)$ given that $\left\{N(t)=n\right\}$ is equal to
\begin{equation}
f_{S_1,S_2,\ldots,S_n|N(t)}(s_1,s_2,\ldots,s_n|n)=\frac{n! \prod_{i=1}^{n} \lambda(s_i)}{\Lambda(t)^n}, 
\end{equation}
where $\Lambda(\cdot)$ is given by Equation (\ref{acumuladalambda}). Hence, the joint probability is
\begin{align*} \nonumber
f(s_1,s_2,\ldots,s_n, n)  & = P(S_1=s_1, S_2=s_2, \ldots, S_n=s_n, N(t)=n) \\ &= \exp(-\Lambda(t))\prod_{i=1}^{n} \lambda(s_i), \label{jointd}
\end{align*}
for $s_1 < s_2< \ldots < s_n<t$.

Expectations of the processes $\left\{W_j(t), t \geq 0\right\}$ for $j=1, 2, \ldots$ given by Equations (\ref{Wj}) and (\ref{WN}) are given next. 

\begin{lemma} \label{generalexpectations} The expectation of the process $\left\{W_j(t), t \geq 0\right\}$ given by Equations (\ref{Wj}) and (\ref{WN})  is equal to
\begin{equation} \label{expectationW}
\mathbb{E} \left[W_j(t)\right] = \sum_{n=j}^{\infty} \frac{\alpha c^{n-1}}{\beta} \displaystyle{\int_{0}^{t}} \frac{\Lambda(s)^n}{n!} \exp(-\Lambda(s)) ds, \quad t \geq 0, 
\end{equation}
for $j=1, 2, \ldots$. 
\end{lemma}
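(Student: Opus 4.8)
The plan is to condition on the arrival process and exploit the additivity of the gamma distribution, which already reduces $W_j(t)$, given $\{N(t)=n\}$ and $\{(S_1,\dots,S_n)=(s_1,\dots,s_n)\}$, to a gamma variable with shape $\alpha_{j,n,t}$ and scale $\beta$, so that its conditional mean is $\alpha_{j,n,t}/\beta$. Since the $j$-th process exists only once at least $j$ processes have started, I interpret $\mathbb{E}[W_j(t)]$ as the expectation on $\{N(t)\ge j\}$, so the tower property gives
\begin{equation*}
\mathbb{E}\left[W_j(t)\right]=\sum_{n=j}^{\infty} P(N(t)=n)\, \mathbb{E}\left[\left.\frac{\alpha_{j,n,t}}{\beta}\,\right|\,N(t)=n\right],
\end{equation*}
where the inner expectation is taken against the conditional density of $(S_1,\dots,S_n)$ given $\{N(t)=n\}$ recalled just above the statement.

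The key observation is that the shape parameter admits the compact representation $\alpha_{j,n,t}=\alpha\int_{s_j}^{t} c^{\,\kappa(u)-1}\,du$, where $\kappa(u)$ is the number of the ordered points $s_1<\dots<s_n$ not exceeding $u$; this follows from (\ref{alphajnt}) and (\ref{alphannt}) by splitting $[s_j,t]$ into the intervals $(s_z,s_{z+1})$, on each of which exactly $z$ processes are present. Substituting this, multiplying by $P(N(t)=n)=\Lambda(t)^n e^{-\Lambda(t)}/n!$, and inserting the explicit conditional density turns the problem into evaluating an integral of $\alpha\bigl(\int_{s_j}^t c^{\kappa(u)-1}\,du\bigr)\prod_i\lambda(s_i)$, against $e^{-\Lambda(t)}$, over the simplex $\{0<s_1<\dots<s_n<t\}$.

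The main computational step — and the only place that needs care — is to interchange the $du$-integral with the simplex integral (legitimate by Tonelli, as all integrands are nonnegative) and, for each fixed $u$, to sum over the ``cut'' position $z=\kappa(u)$: the configuration $s_1<\dots<s_z\le u<s_{z+1}<\dots<s_n<t$ factorizes the simplex integral into $\Lambda(u)^z/z!$ times $(\Lambda(t)-\Lambda(u))^{n-z}/(n-z)!$. This yields
\begin{equation*}
\mathbb{E}\left[W_j(t)\right]=\frac{\alpha}{\beta}\,e^{-\Lambda(t)}\int_0^t\sum_{n=j}^{\infty}\sum_{z=j}^{n} c^{\,z-1}\frac{\Lambda(u)^z}{z!}\,\frac{(\Lambda(t)-\Lambda(u))^{n-z}}{(n-z)!}\,du.
\end{equation*}
Exchanging the two sums ($\sum_{n\ge j}\sum_{z=j}^{n}=\sum_{z\ge j}\sum_{n\ge z}$), recognizing the inner series as $e^{\Lambda(t)-\Lambda(u)}$, and finally pulling the remaining sum out of the integral (Tonelli again) and relabelling $z\mapsto n$ gives exactly (\ref{expectationW}). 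The obstacles here are purely bookkeeping: tracking the summation ranges and justifying the interchanges, both of which are routine because every term is nonnegative.

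As an alternative one may skip the simplex computation altogether by conditioning on the whole path $\{N(u)\}_{u\le t}$: the representation above then becomes $\alpha_{j,N(t),t}=\alpha\int_{S_j}^t c^{N(u)-1}\,du=\alpha\int_0^t c^{N(u)-1}\mathbbm{1}_{\{N(u)\ge j\}}\,du$, so that $\mathbb{E}[W_j(t)\mid (N(u))_{u\le t}]=\frac{\alpha}{\beta}\int_0^t c^{N(u)-1}\mathbbm{1}_{\{N(u)\ge j\}}\,du$, and taking expectations with $N(u)\sim\mathrm{Poisson}(\Lambda(u))$ together with Tonelli produces (\ref{expectationW}) in one line. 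I would most likely present the first route, since it relies only on the conditional arrival density already introduced in the paper.
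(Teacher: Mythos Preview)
Your argument is correct. Both the main route (condition on $N(t)=n$, represent $\alpha_{j,n,t}$ as $\alpha\int_{s_j}^{t}c^{\kappa(u)-1}\,du$, swap the $du$-integral with the simplex integral, factorize as $\Lambda(u)^z/z!\cdot(\Lambda(t)-\Lambda(u))^{n-z}/(n-z)!$, and sum the resulting exponential series) and the pathwise alternative are valid; all interchanges are covered by Tonelli as you note.

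The paper proceeds differently. Instead of conditioning on $\{N(t)=n\}$, it applies linearity of expectation directly to the decomposition (\ref{Wj}) and, for each $n\ge j$, computes separately
\[
\mathbb{E}\!\left[(S_{n+1}-S_n)\mathbf{1}_{\{S_{n+1}\le t\}}\right]
\quad\text{and}\quad
\mathbb{E}\!\left[(t-S_n)\mathbf{1}_{\{S_n\le t< S_{n+1}\}}\right]
\]
by integrating against the joint density of $(S_1,\dots,S_{n+1})$; adding these two pieces gives $\int_0^t\frac{\Lambda(s)^n}{n!}e^{-\Lambda(s)}\,ds$, and multiplying by $\alpha c^{n-1}/\beta$ yields the $n$-th term of (\ref{expectationW}). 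In effect the paper is computing the expected sojourn time of $N(\cdot)$ at level $n$ on $[0,t]$, split according to whether the $(n{+}1)$-st arrival has occurred by time $t$. Your alternative route is the pathwise version of exactly this idea: once you write $\alpha_{j,N(t),t}=\alpha\int_0^t c^{N(u)-1}\mathbf{1}_{\{N(u)\ge j\}}\,du$, the two terms the paper evaluates are precisely $\int_0^t\mathbf{1}_{\{N(u)=n\}}\,du$ rewritten via $\{S_{n+1}\le t\}$ and $\{S_n\le t<S_{n+1}\}$. Your main route trades that direct sojourn-time computation for a combinatorial factorization of the ordered-simplex integral; it is slightly longer but has the merit of staying entirely within the conditional framework already set up in the text.
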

\begin{proof}
The proof is given in the Appendix. 
\end{proof}

{\it Remark.} Notice that (\ref{expectationW}) is equal to
\begin{equation*} 
\mathbb{E} \left[W_j(t)\right] = \sum_{n=j}^{\infty} \frac{\alpha c^{n-1}}{\beta} \int_{0}^{t} P(N(s)=n)ds.  
\end{equation*}
As we expected, expectation $\mathbb{E} \left[W_j(t)\right]$ is increasing with respect to $c$ for all $j$.

A closed-form expression for Equation (\ref{expectationW}) is obtained assuming that $\lambda(u)=\lambda$, that is, when the degradation processes arrive to the system following a homogeneous Poisson process. Firstly, we prove the following lemma. 

\begin{lemma} \label{exponentialexpectations} For $n \geq 1$ and $t>0$, 
\begin{equation} \label{intexp}
\int_{0}^{t} s^n \exp(-\lambda s) ds = \frac{n!}{\lambda^{n+1}} \left(1-\sum_{k=0}^{n} \exp(-\lambda t) \frac{(\lambda t)^k}{k!}\right). 
\end{equation}
\end{lemma}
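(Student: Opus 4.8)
The plan is to prove the identity by induction on $n$, using a single integration by parts to pass from $n-1$ to $n$.

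For the base case I take $n=0$ (this is consistent with the claimed formula and makes the recursion cleaner, even though only $n\geq 1$ is asserted): directly $\int_0^t e^{-\lambda s}\,ds=\frac{1}{\lambda}\left(1-e^{-\lambda t}\right)$, which is exactly the right-hand side of (\ref{intexp}) with $n=0$. For the inductive step, assume (\ref{intexp}) holds for $n-1$. Integrating by parts with $u=s^n$ and $dv=e^{-\lambda s}\,ds$, so that $v=-\frac{1}{\lambda}e^{-\lambda s}$, gives
\[
\int_0^t s^n e^{-\lambda s}\,ds=-\frac{t^n}{\lambda}e^{-\lambda t}+\frac{n}{\lambda}\int_0^t s^{n-1}e^{-\lambda s}\,ds.
\]
Substituting the induction hypothesis for the remaining integral, the right-hand side becomes
\[
-\frac{t^n}{\lambda}e^{-\lambda t}+\frac{n!}{\lambda^{n+1}}\left(1-\sum_{k=0}^{n-1}e^{-\lambda t}\frac{(\lambda t)^k}{k!}\right).
\]

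The only remaining point is to check that the boundary term $-\frac{t^n}{\lambda}e^{-\lambda t}$ equals $-\frac{n!}{\lambda^{n+1}}e^{-\lambda t}\frac{(\lambda t)^n}{n!}$, i.e. exactly the missing $k=n$ summand; this is the elementary simplification $\frac{n!}{\lambda^{n+1}}\cdot\frac{(\lambda t)^n}{n!}=\frac{t^n}{\lambda}$. Combining the two terms then yields (\ref{intexp}) with the sum running up to $k=n$, which closes the induction. There is no genuine obstacle here; the one place that warrants (minimal) care is matching the boundary term from integration by parts with the new top-degree term of the Poisson partial sum.

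As an alternative I could bypass the induction by recognizing the right-hand side as $\frac{n!}{\lambda^{n+1}}\,P\!\big(G_{n+1}\leq t\big)$, where $G_{n+1}$ is $\mathrm{Gamma}(n+1,\lambda)$, since $1-\sum_{k=0}^{n}e^{-\lambda t}(\lambda t)^k/k!$ is the classical Erlang–Poisson tail identity; the left-hand side is then $\frac{n!}{\lambda^{n+1}}$ times the integral of the $\mathrm{Gamma}(n+1,\lambda)$ density over $[0,t]$. I would nonetheless present the integration-by-parts argument, as it is completely self-contained.
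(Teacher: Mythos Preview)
Your proof is correct and follows essentially the same route as the paper: induction on $n$ with a single integration by parts, then identifying the boundary term as the new top summand of the Poisson partial sum. The only cosmetic differences are that the paper starts the base case at $n=1$ and indexes the step as $n\to n+1$, and it does not mention the Erlang--Poisson alternative you sketch.
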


\begin{proof}
The proof is given in the Appendix. 
\end{proof}

Assuming that $\lambda(t)=\lambda$ for all $t$ and using Lemma \ref{exponentialexpectations}, a closed-form expression for Equation (\ref{expectationW}) is obtained. The result is given in the following lemma. 

\begin{lemma} \label{meanlemma}
If the degradation processes start at random times following a non homogeneous Poisson process with parameter $\lambda$, then the expectation of the processes $\left\{W(t), t \geq 0\right\}$ shown in Equations (\ref{Wj}) and (\ref{WN}) are given by
\begin{align} \nonumber
\mathbb{E} \left[W_j(t)\right] &= \sum_{n=j}^{\infty} \frac{\alpha c^{n-1}}{\lambda \beta}P[N(t)>n], 
\end{align}
where $N(t)$ denotes the number of degradation processes at time $t$. 
\end{lemma}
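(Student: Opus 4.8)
The plan is to specialize the general expectation formula of Lemma \ref{generalexpectations} to the homogeneous case and then evaluate the resulting integrals with Lemma \ref{exponentialexpectations}. First I would set $\lambda(u)=\lambda$, so that $\Lambda(s)=\lambda s$, and substitute this into Equation (\ref{expectationW}) to get
$$\mathbb{E}\left[W_j(t)\right] = \sum_{n=j}^{\infty} \frac{\alpha c^{n-1}}{\beta}\int_{0}^{t} \frac{(\lambda s)^n}{n!}e^{-\lambda s}\,ds = \sum_{n=j}^{\infty} \frac{\alpha c^{n-1}\lambda^{n}}{\beta\, n!}\int_{0}^{t} s^n e^{-\lambda s}\,ds.$$
Then I would apply Lemma \ref{exponentialexpectations} to each inner integral. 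The factorial $n!$ and the factor $\lambda^{n}$ cancel against $n!/\lambda^{n+1}$, leaving a single surviving power $\lambda^{-1}$, so that
$$\mathbb{E}\left[W_j(t)\right] = \sum_{n=j}^{\infty}\frac{\alpha c^{n-1}}{\beta\lambda}\left(1-\sum_{k=0}^{n}e^{-\lambda t}\frac{(\lambda t)^k}{k!}\right).$$

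The last step is to identify the parenthesis as a Poisson tail probability: since under a homogeneous arrival rate $\lambda$ the number of degradation processes $N(t)$ is Poisson distributed with mean $\lambda t$, one has $\sum_{k=0}^{n}e^{-\lambda t}(\lambda t)^k/k! = P[N(t)\le n]$, hence the parenthesis equals $P[N(t)>n]$. Substituting this gives exactly the stated expression $\mathbb{E}[W_j(t)] = \sum_{n=j}^{\infty}\frac{\alpha c^{n-1}}{\lambda\beta}P[N(t)>n]$.

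There is essentially no delicate analytic obstacle here; the computation is a direct substitution. The only point I would address explicitly in one line is the legitimacy of evaluating the series term by term and interchanging the (finite, nonnegative) inner sum with the integral, which is justified by Tonelli's theorem since every summand is nonnegative, and the convergence of the resulting outer series, which holds for every fixed $t$ and every $c>0$ because the Poisson tail $P[N(t)>n]$ decays faster than any geometric sequence (it is bounded by a constant times $(\lambda t)^{n}/n!$), so it dominates the geometric growth of $c^{n-1}$.
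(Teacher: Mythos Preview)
Your proof is correct and follows exactly the route the paper indicates: specialize Equation (\ref{expectationW}) to $\Lambda(s)=\lambda s$, apply Lemma \ref{exponentialexpectations} to each integral, and recognize the resulting bracket as the Poisson tail $P[N(t)>n]$. The paper does not spell out these steps in the Appendix beyond stating that Lemma \ref{exponentialexpectations} is used, so your write-up in fact fills in the details cleanly.
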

\begin{proof}
The proof is given in the Appendix. 
\end{proof}

If the defects arrive to the system following a NHPP with parameter $\lambda$, then applying Lemma \ref{meanlemma}, the differences between expectations are given by
$$\mathbb{E} \left[W_{j}(t)\right]-\mathbb{E} \left[W_{j+1}(t)\right]=\frac{\alpha c^{j-1}}{\lambda \beta}P(N(t)>j). $$

\begin{example} Figure \ref{medias} shows the expected degradation (in mm) for different degradation processes versus $t$ for a dependent NHPP-GP with parameters $(\lambda, \alpha, \beta, c)$. The values of $\lambda$, $\alpha$ and $\beta$ are given by \cite{Blain}. Degradation processes start following a homogeneous Poisson process with parameter $\lambda=0.0002$ defects per hour.  Gamma increments are obtained using $\alpha=0.0004$ $(hours)^{-1}$ and $\beta=1.5$ $(mm)^{-1}$ with dependence parameter $c=1.1$. 
\end{example}
\begin{figure}
	\begin{center}
		\includegraphics[width=0.7\textwidth]{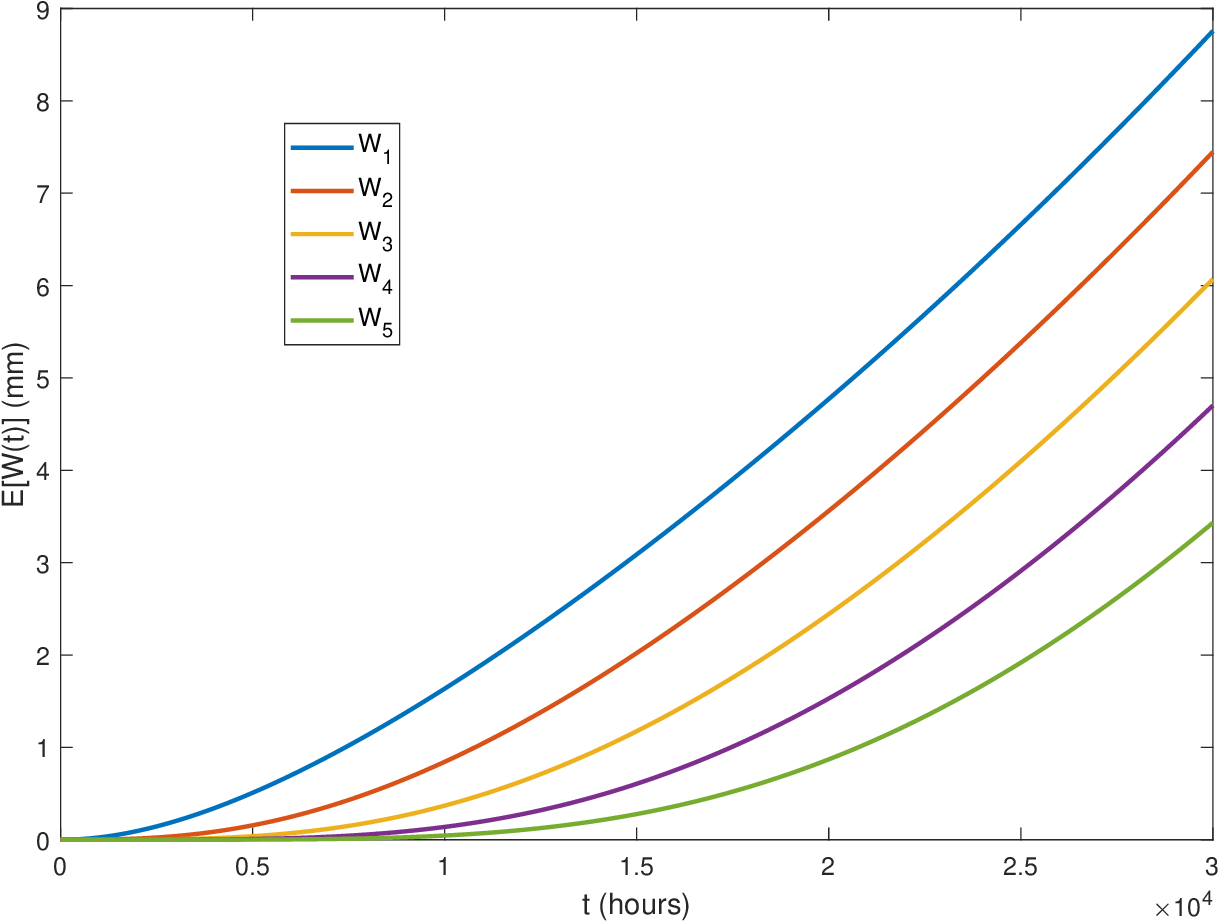}
		\caption{Average degradation for different processes versus $t$} \label{medias}
	\end{center}
\end{figure}
Next section analyzes the system reliability of a system subject to a dependent NHPP-GP model and gives some bounds of the system reliability. 
\section{System reliability} \label{timetofailure}
Starting at time $t=0$ with a new system $W(0)=0$, let $\sigma_{z|0}$ be the first hitting time to reach the level $z$ by the stochastic process $W(t)$. 
$$\sigma_{z|0}=\inf \left\{t \geq 0, \, max(W(t)) \geq z\right\}. $$ 
Lemma \ref{lemasurvival} gives the analytic expression of $\bar{F}_{\sigma_{z}|0}$, the survival function of $\sigma_{z|0}$. 
\begin{lemma} \label{lemasurvival}
The survival function of $\sigma_{z|0}(\cdot)$ is given by
\begin{equation} \label{Zsupervivencia}
\bar{F}_{\sigma_{z}|0}
(t)=P(\sigma_{z|0}\geq t)=\exp(-\Lambda(t)) \left(1+\bar{H}_{z}(t)\right). 
\end{equation}
where $\bar{H}_z(t)$ is given by
\begin{equation} \label{funcionG}
\bar{H}_z(t)=\sum_{n=1}^{\infty} \int_{0}^{t} \int_{s_1}^{t} \ldots \int_{s_{n-1}}^{t} \prod_{i=1}^{n} \lambda(s_i) F_{\alpha_{i,n,t},\beta}(z)ds_i, 
\end{equation}
for $n=1, 2, \ldots$ and $s_0=0$ where $F_{\alpha_{i,n,t},\beta}(\cdot)$ denote the distribution function of a gamma with parameters $\alpha_{i,n,t}$ and $\beta$ and where 
$\alpha_{i,n,t}$ is given by (\ref{alphajnt}). 
\end{lemma}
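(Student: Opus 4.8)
The plan is to reduce the event $\{\sigma_{z|0}\ge t\}$ to a statement about the finite-dimensional random vector $W(t)$, to condition on the number of defects and on their arrival epochs, and then to exploit the conditional independence of the coordinates of $W(t)$.

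First I would observe that each coordinate process $\{W_j(s),\, s\ge S_j\}$ is non-decreasing, since by Equations (\ref{Wj})--(\ref{WN}) it is a sum of non-negative gamma increments, and that a newly born process enters the system at level $0$; hence $\max(W(s))$ is non-decreasing in $s$. Consequently $\{\sigma_{z|0}\ge t\}=\{\max(W(s))<z \text{ for all } s<t\}$, which by monotonicity equals $\{\max(W(t^-))<z\}$; since for a fixed $t$ the jump epochs of the gamma processes and the arrival epochs of the NHPP avoid $t$ almost surely, $\bar F_{\sigma_z|0}(t)=P(\max(W(t))<z)$.

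Next I would condition on $\{N(t)=n\}$ and on a realization $(S_1,\ldots,S_n)=(s_1,\ldots,s_n)$ with $0<s_1<\cdots<s_n<t$. When $n=0$ no defect is present, so $\max(W(t))=0<z$ and this case contributes $P(N(t)=0)=\exp(-\Lambda(t))$. When $n\ge 1$, the representation in (\ref{Wj})--(\ref{WN}) shows that $W_j(t)$ is built from the replicas $\{X_i^{(j)}:\ i=j,\ldots,n\}$, and distinct replica indices correspond to independent copies; therefore $W_1(t),\ldots,W_n(t)$ are mutually independent given the arrival epochs, with $W_i(t)$ following a gamma distribution with shape parameter $\alpha_{i,n,t}$ as in (\ref{alphajnt})--(\ref{alphannt}) and scale parameter $\beta$. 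Hence
\begin{equation*}
P\bigl(\max(W(t))<z \mid N(t)=n,\, S_1=s_1,\ldots,S_n=s_n\bigr)=\prod_{i=1}^{n}F_{\alpha_{i,n,t},\beta}(z).
\end{equation*}

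Finally I would integrate this conditional probability against the joint density $f(s_1,\ldots,s_n,n)=\exp(-\Lambda(t))\prod_{i=1}^{n}\lambda(s_i)$ recalled just above the lemma, sum over $n\ge 1$, and add the $n=0$ contribution. Factoring $\exp(-\Lambda(t))$ out then leaves the constant $1$ coming from the $n=0$ term together with the series $\bar H_z(t)$ of (\ref{funcionG}), which gives (\ref{Zsupervivencia}). The step I expect to need the most care is the conditional independence of $W_1(t),\ldots,W_n(t)$: it is precisely what turns the distribution of their maximum into the product $\prod_{i=1}^{n}F_{\alpha_{i,n,t},\beta}(z)$, and it relies on reading the superscripts in (\ref{Wj})--(\ref{WN}) as labelling independent replicas; the monotonicity reduction of the hitting-time event is routine but worth stating explicitly.
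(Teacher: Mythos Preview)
Your proposal is correct and follows essentially the same approach as the paper: condition on $N(t)=n$ and on the arrival epochs $(s_1,\ldots,s_n)$, use the gamma law of each $W_i(t)$ with shape $\alpha_{i,n,t}$, take the product of the $F_{\alpha_{i,n,t},\beta}(z)$, integrate against the joint density $\exp(-\Lambda(t))\prod_i\lambda(s_i)$, and sum over $n$. The paper's proof is terser and simply writes $P(\max(W(t))<z,\,N(t)=n)$ directly as the ordered integral of the product, whereas you spell out two points the paper leaves implicit: the monotonicity argument reducing $\{\sigma_{z|0}\ge t\}$ to $\{\max(W(t))<z\}$, and the conditional independence of $W_1(t),\ldots,W_n(t)$ coming from the independent replicas $X_i^{(j)}$; both are worth stating and your identification of the latter as the crucial step is apt.
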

\begin{proof}  
The proof of this lemma is given in the Appendix.
\end{proof}
Equation (\ref{Zsupervivencia}) allows to compute the system reliability for a dependent NHPP-GP model. If the system fails when the maximum of the defects exceeds the failure threshold $L$, then the system reliability is given by
$$R(t)=\bar{F}_{\sigma_{L|0}}(t), \quad t \geq 0, $$
where $\bar{F}_{\sigma_{L|0}}(t)$ is obtained replacing $z$ by $L$ in Equation (\ref{Zsupervivencia}). 

Since the function (\ref{Zsupervivencia}) is tricky to use, some bounds of this function are given in Lemma \ref{lemasurvival} using a  
a result relating stochastic orders and gamma distributions given by \cite{Muler}. Firstly, the definition of stochastic order and likelihood ratio order is first recalled. 
\begin{definition} Let $X$ and $Y$ be two non negative random variables with probability density function $f_X$ and $f_Y$ with respect to the Lebesgue measure, cumulative distribution functions $F_X$ and $F_Y$ and survival functions $\bar{F}_X$ and $\bar{F}_Y$ respectively. 
\begin{itemize}
\item $X$ is said to be smaller than $Y$ in the usual stochastic order ($X \prec_{sto} Y$) if $\bar{F}_X \leq \bar{F}_Y$
(or $F_X \geq F_Y$ respectively). 
\item $X$ is said to be smaller than $Y$ in the likelihood ratio order ($X \prec_{lr} Y$) if $f_Y/f_X$ is non-decreasing on the union of the supports of $X$ and $Y$. 
\end{itemize}
\end{definition}
The likelihood ratio order implies the usual stochastic order. 

The result linking the likelihood ratio order and gamma distribution and given in \cite{Muler} (pag.62) is the following. 
 
\begin{lemma} \label{lemma1} Let $X$ e $Y$ be gamma distributed random variables with parameters $(a_1,b_1)$ and $(a_2,b_2)$, respectively, where $a_i,b_i>0$ for $i=1,2$. Then, if $a_1 \leq a_2$ and $b_1 \geq b_2$, then $X\prec_{lr} Y$. 
\end{lemma}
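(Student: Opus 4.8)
The plan is to verify the defining condition of the likelihood ratio order directly: writing $f_X$ and $f_Y$ for the two gamma densities, it suffices to show that the map $x \mapsto f_Y(x)/f_X(x)$ is non-decreasing on the common support $(0,\infty)$, since then $X \prec_{lr} Y$ by definition of the likelihood ratio order (and, if one wants it, $X \prec_{sto} Y$ follows from the implication recalled just above the statement).

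First I would form the ratio explicitly. With $f_X(x) = \frac{b_1^{a_1}}{\Gamma(a_1)} x^{a_1 - 1} e^{-b_1 x}$ and $f_Y(x) = \frac{b_2^{a_2}}{\Gamma(a_2)} x^{a_2 - 1} e^{-b_2 x}$ for $x > 0$, one obtains
\[
\frac{f_Y(x)}{f_X(x)} = \frac{b_2^{a_2}\,\Gamma(a_1)}{b_1^{a_1}\,\Gamma(a_2)}\; x^{\,a_2 - a_1}\, e^{-(b_2 - b_1)x}, \qquad x > 0,
\]
so the ratio is a strictly positive constant times $x^{\,a_2 - a_1}$ times $e^{-(b_2 - b_1)x}$.

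Next I would invoke the hypotheses $a_1 \le a_2$ and $b_1 \ge b_2$: these give $a_2 - a_1 \ge 0$ and $-(b_2 - b_1) \ge 0$, so each of the factors $x \mapsto x^{\,a_2 - a_1}$ and $x \mapsto e^{-(b_2 - b_1)x}$ is non-negative and non-decreasing on $(0,\infty)$. Since a product of non-negative non-decreasing functions is non-decreasing, $f_Y/f_X$ is non-decreasing, which is exactly $X \prec_{lr} Y$. Equivalently, one may differentiate the log-ratio, $\frac{d}{dx}\log\!\bigl(f_Y(x)/f_X(x)\bigr) = \frac{a_2 - a_1}{x} - (b_2 - b_1) \ge 0$ for $x>0$. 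There is essentially no obstacle here; the only points worth a line of care are that both densities vanish off $(0,\infty)$, so monotonicity need only be checked on that set, and that the degenerate case $a_1 = a_2$, $b_1 = b_2$ produces a constant ratio, which still counts as non-decreasing.
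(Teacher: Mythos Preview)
Your argument is correct: forming the density ratio and checking monotonicity via the sign of $(a_2-a_1)/x + (b_1-b_2)$ is the standard and cleanest route. Note, however, that the paper does not actually prove this lemma; it is quoted as a known result from M\"uller and Stoyan (2002, p.~62) and used as a black box, so there is no ``paper's own proof'' to compare against. Your self-contained verification is exactly what one would supply if a proof were required.
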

Since the likelihood ratio implies the usual stochastic order, we get that if $a_1 \leq a_2$ and $b_1 \geq b_2$, then $X\prec_{sto} Y$, that is
\begin{equation}
\bar{F}_X(t) \leq \bar{F}_Y(t) \quad \mbox{or equivalently} \quad {F}_X(t) \geq {F}_Y(t), \quad \forall t. 
\end{equation}
Lemma \ref{lemma1} is used to obtain bounds of the system reliability. 
\begin{lemma} \label{boundsurvival} Bounds of the survival function given by (\ref{Zsupervivencia}) are given by
\begin{equation}
\bar{F}_{\sigma_{z|0}}(t) \leq \exp\left(-\int_{0}^{t} \lambda(u)\bar{F}_{\alpha(t-u),\beta}(z)du\right), \quad t \geq 0
\end{equation}
for the upper bound and for the lower bound
\begin{equation}
\exp(-\Lambda(t))\left(1+\sum_{i=1}^{\infty} \frac{\left(\int_{0}^{t}\lambda(s)F_{Y_z}^{i-1}(t-s)ds\right)^i}{i!}\right) \leq \bar{F}_{\sigma_{z|0}}(t), \quad t \geq 0. 
\end{equation}
\end{lemma}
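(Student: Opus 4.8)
The plan is to bound the survival function~(\ref{Zsupervivencia}) term by term in the series $\bar{H}_z$ of~(\ref{funcionG}), using that a gamma distribution with a fixed scale $\beta$ has, at any fixed point $z$, a distribution function that is non-increasing in the shape parameter (this is precisely the $\prec_{sto}$ consequence of Lemma~\ref{lemma1}), and then to recognise the resulting series as an exponential (upper bound) or to leave it as a series (lower bound).

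First I would derive two-sided estimates for the shape parameters $\alpha_{i,n,t}$ in~(\ref{alphajnt}). Since $c\ge 1$, every coefficient occurring there satisfies $1\le c^{\,z-1}\le c^{\,n-1}$ for $i\le z\le n-1$ (and $1\le c^{\,n-1}$); replacing each coefficient by $1$, respectively by $c^{\,n-1}$, and telescoping the increments $s_{z+1}-s_z$ together with the last term $t-s_n$ yields
\[
\alpha(t-s_i)\ \le\ \alpha_{i,n,t}\ \le\ c^{\,n-1}\alpha(t-s_i),\qquad 1\le i\le n .
\]
Lemma~\ref{lemma1}, applied with the common scale $\beta$, then gives, for every $i$,
\[
F_{c^{\,n-1}\alpha(t-s_i),\beta}(z)\ \le\ F_{\alpha_{i,n,t},\beta}(z)\ \le\ F_{\alpha(t-s_i),\beta}(z).
\]

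The second step is to insert these inequalities into the $n$-th summand of~(\ref{funcionG}) and to exploit a symmetry collapse. The key point is that after the substitution the integrand becomes a product $\prod_{i=1}^n g(s_i)$ of a single nonnegative one-variable function; since such a product is symmetric in $(s_1,\dots,s_n)$ and the cube $[0,t]^n$ decomposes (up to measure zero) into the $n!$ ordered simplices obtained by permuting the coordinates,
\[
\int_{0}^{t}\!\int_{s_1}^{t}\!\cdots\!\int_{s_{n-1}}^{t}\prod_{i=1}^n g(s_i)\,ds_i\ =\ \frac{1}{n!}\Big(\int_{0}^{t} g(s)\,ds\Big)^{n}.
\]
Taking $g(s)=\lambda(s)F_{\alpha(t-s),\beta}(z)$ bounds $\bar{H}_z(t)$ above by $\sum_{n\ge 1}\frac{1}{n!}\big(\int_0^t\lambda(s)F_{\alpha(t-s),\beta}(z)\,ds\big)^{n}$, so $1+\bar{H}_z(t)\le\exp\big(\int_0^t\lambda(s)F_{\alpha(t-s),\beta}(z)\,ds\big)$; multiplying by $\exp(-\Lambda(t))$ and using $\Lambda(t)=\int_0^t\lambda(s)\,ds$ together with $\bar{F}_{\alpha(t-s),\beta}=1-F_{\alpha(t-s),\beta}$, the two exponentials merge into the claimed upper bound. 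Taking instead $g(s)=\lambda(s)F_{c^{\,n-1}\alpha(t-s),\beta}(z)$ bounds the $n$-th summand from below by $\frac{1}{n!}\big(\int_0^t\lambda(s)F_{c^{\,n-1}\alpha(t-s),\beta}(z)\,ds\big)^{n}$; restoring the $n=0$ term $1$ and the factor $\exp(-\Lambda(t))$ then yields the stated lower bound, with $F_{Y_z}^{\,i-1}(t-s)$ read as $F_{c^{\,i-1}\alpha(t-s),\beta}(z)$.

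I expect the only delicate point to be this first step: producing the clean two-sided bound on $\alpha_{i,n,t}$ from~(\ref{alphajnt}) and keeping straight the direction of the monotonicity coming from Lemma~\ref{lemma1} (and the implicit use of $c\ge 1$). Once that is in place, the simplex-to-cube identity and the exponential resummation are routine, and the interchanges of summation and integration require no justification beyond nonnegativity of every term.
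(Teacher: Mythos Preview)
Your proposal is correct and follows essentially the same route as the paper: bound $\alpha_{i,n,t}$ two-sidedly by $\alpha(t-s_i)$ and $c^{n-1}\alpha(t-s_i)$, invoke Lemma~\ref{lemma1} to turn this into the sandwich $F_{c^{n-1}\alpha(t-s_i),\beta}(z)\le F_{\alpha_{i,n,t},\beta}(z)\le F_{\alpha(t-s_i),\beta}(z)$, and then collapse the ordered simplex integral into $\tfrac{1}{n!}(\int_0^t\cdots)^n$ to recover the exponential (upper bound) or the displayed series (lower bound). The only difference is that you spell out the simplex-to-cube symmetry step explicitly, whereas the paper passes directly from the nested integral to the $\tfrac{(\cdot)^n}{n!}$ form without comment.
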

Notice that the lower bound corresponds to the same expression obtained by \cite{Caballe} in Lemma 2 for the survival function of a system subject to an independent NHPP-GP model ($c=1$).

\begin{proof} The proof is given in the Appendix. 
\end{proof}

\begin{example} Figure \ref{survivalf} shows the survival function $\bar{F}_{\sigma_z|0}$ for different values of $c$  and for $z=2$ mm. This figure has been obtained using parameters given by \cite{Blain} and used in previous examples ($\lambda=0.0002$ defects per hour, $\alpha=0.0004$ $(hours)^{-1}$ and $\beta=1.5 (mm)^{-1}$). The threshold $z=2$ is chosen since the cracks are detectable when their length reaches the detection threshold $z=2$ mm. Figure \ref{survivalf} corresponds to the probability that no defect has been detected in the system versus time. 
\end{example}
\begin{figure}
	\begin{center}
		\includegraphics[width=0.7\textwidth]{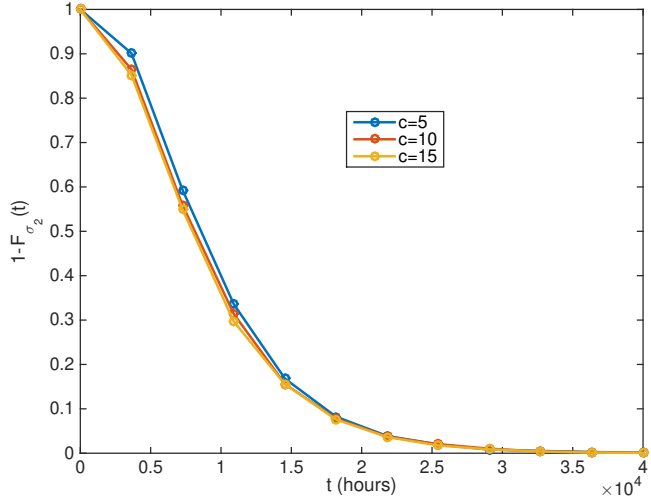}
		\caption{Survival function of crack detection} \label{survivalf}
	\end{center}
\end{figure}
Next section computes the system reliability for a system with an initial degradation. Lemmas \ref{lemasurvival} and \ref{survivalnonull} allow to compute the kernel of the stochastic process that describes the evolution of the system degradation. 
\subsection{System reliability in a degraded system} \label{reliabilitydegradedsystem}
Given $$W(0)=W_0=(x_1, x_2, \ldots, x_n),$$ let $\sigma_{z \mid x}$ be the first hitting time to the degradation level $z$, that is, 
$$\sigma_{z \mid x}=\inf \left\{t \geq 0, \, max(W(t)) \geq z\right\}. $$
\begin{lemma} \label{survivalnonull}
Given $W_0=x$, the survival distribution $\bar{F}_{\sigma_{z} \mid x}$ is given by
\begin{align*}
 & \bar{F}_{\sigma_{z}|x}(t) = \\
 &= exp(-\lambda(t))\left( \prod_{i=1}^{n}F_{\alpha c^{n-1}t, \beta}(z-x_i) \right. \\ 
 &+ \left. \sum_{p=1}^{\infty} \int_{0}^{t}\int_{s_{n+1}}^{t} \ldots \int_{s_{n+p-1}}^{t} \prod_{j=1}^{p} \lambda(s_{n+j})\prod_{i=1}^{n} F_{\alpha^*_{0,n+p,t},\beta}(z-x_i)\prod_{j=1}^{p}F_{\alpha^*_{n+j,n+p,t},\beta}(z) \right),  
\end{align*}
where $\alpha^*_{0,n+p,t}$ and $\alpha^*_{n+j,n+p,t}$ are given by
\begin{equation} \label{alphastar1}
\alpha_{0,n+p,t}^*=\alpha c^{n-1}s_{n+1}+\alpha c^{n}(s_{n+2}-s_{n+1})+\ldots+\alpha c^{n+p-1}(t-s_{n+p})
\end{equation}
and for $1 \leq j \leq p-1$
\begin{equation} \label{alphastar2}
\alpha_{n+j,n+p,t}^*=\sum_{i=j}^{p-1}\alpha c^{n+i-1}(s_{n+i+1}-s_{n+i})+\alpha c^{n+p-1}(t-s_{n+p}). 
\end{equation}
\end{lemma}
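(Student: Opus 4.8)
The plan is to follow the template of the proof of Lemma~\ref{lemasurvival}, now restarting the system from the degraded state $W_0=(x_1,\ldots,x_n)$. By the degradation‑increment assumption of Section~\ref{probabilisticmodelling}, the law of an increment over an interval on which the process count is constant depends only on that count and on the interval length, not on past degradation or on the processes' starting times; hence $(x_1,\ldots,x_n)$ may be treated simply as a system carrying $n$ active processes. The first step is to note that each coordinate $W_i(t)$ is non‑decreasing in $t$ (gamma increments are non‑negative), so $\max_{0\le s\le t}\max_i W_i(s)=\max_i W_i(t)$, and therefore $\{\sigma_{z\mid x}\ge t\}=\{W_i(t)<z\text{ for all }i\}$, an event depending only on the endpoint $t$. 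I then condition on the number $p\ge 0$ of new processes starting in $(0,t)$ and on their ordered starting times $0<s_{n+1}<\cdots<s_{n+p}<t$; for the NHPP, the probability of exactly $p$ such arrivals at infinitesimal locations $ds_{n+1}\cdots ds_{n+p}$ on that simplex equals $\exp(-\Lambda(t))\prod_{j=1}^{p}\lambda(s_{n+j})\,ds_{n+j}$, the same joint expression recalled earlier in Section~\ref{probabilisticmodelling}.

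Next I would compute the conditional survival probability given $p$ and the $s_{n+j}$. Put $s_n:=0$ and $s_{n+p+1}:=t$. On the subinterval $(s_{n+\ell-1},s_{n+\ell})$ exactly $n+\ell-1$ processes are active, so each of them receives an independent gamma increment with scale $\beta$ and shape $\alpha c^{\,n+\ell-2}(s_{n+\ell}-s_{n+\ell-1})$. Summing these shapes over the subintervals on which a given process is alive and using additivity of gamma laws with common scale $\beta$: an original process $W_i$, alive on all of $(0,t)$, has $W_i(t)-x_i$ gamma with shape $\alpha^*_{0,n+p,t}$ as in (\ref{alphastar1}), whence $P(W_i(t)<z\mid\cdots)=F_{\alpha^*_{0,n+p,t},\beta}(z-x_i)$; a new process $W_{n+j}$, alive only on $(s_{n+j},t)$, has $W_{n+j}(t)$ gamma with shape $\alpha^*_{n+j,n+p,t}$ as in (\ref{alphastar2}), whence $P(W_{n+j}(t)<z\mid\cdots)=F_{\alpha^*_{n+j,n+p,t},\beta}(z)$, the empty‑sum case $j=p$ reducing $\alpha^*_{n+p,n+p,t}$ to $\alpha c^{\,n+p-1}(t-s_{n+p})$ in agreement with (\ref{alphannt}). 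Since, conditionally on the arrival times, these increments occupy pairwise disjoint time intervals across distinct processes, they are mutually independent, and the conditional survival probability factorizes as $\prod_{i=1}^{n}F_{\alpha^*_{0,n+p,t},\beta}(z-x_i)\prod_{j=1}^{p}F_{\alpha^*_{n+j,n+p,t},\beta}(z)$.

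Finally I would integrate this factorized expression against $\exp(-\Lambda(t))\prod_{j=1}^{p}\lambda(s_{n+j})\,ds_{n+j}$ over $0<s_{n+1}<\cdots<s_{n+p}<t$ and sum over $p\ge 0$. The term $p=0$ has empty products of arrival densities and an empty inner integral, and collapses to $\exp(-\Lambda(t))\prod_{i=1}^{n}F_{\alpha c^{\,n-1}t,\beta}(z-x_i)$, the leading term of the statement; the terms $p\ge 1$ reproduce exactly the iterated integral displayed in the lemma. Pulling $\exp(-\Lambda(t))$ out of the sum yields the claimed formula for $\bar F_{\sigma_z\mid x}(t)$ (with $\exp(-\Lambda(t))$ read in place of the $\exp(-\lambda(t))$ written in the statement).

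The probabilistic ingredients here---the monotonicity reduction to the endpoint, the NHPP arrival‑time density, and conditional independence together with gamma additivity of the increments---are exactly those already used for Lemma~\ref{lemasurvival}, so the main effort is the combinatorial bookkeeping: verifying that the telescoping sums of per‑subinterval shapes reproduce (\ref{alphastar1}) and (\ref{alphastar2}) verbatim, with the conventions $s_n=0$, $s_{n+p+1}=t$ and the empty‑sum reading of the $j=p$ term handled correctly. A secondary point worth stating explicitly is that restarting from $(x_1,\ldots,x_n)$ is legitimate precisely because the increment law depends on the past only through the current process count.
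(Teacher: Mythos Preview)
Your proposal is correct and follows essentially the same route as the paper: condition on the number $p\ge 0$ of new arrivals in $(0,t)$ and their ordered times, identify the gamma shape parameters $\alpha^*_{0,n+p,t}$ and $\alpha^*_{n+j,n+p,t}$ for each process via additivity, factorize by conditional independence, integrate against the NHPP density, and sum. You are in fact slightly more explicit than the paper about the monotonicity reduction to the endpoint and about why the restart from $(x_1,\ldots,x_n)$ depends only on the current process count, and you correctly flag that the factor should read $\exp(-\Lambda(t))$ rather than $\exp(-\lambda(t))$.
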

\begin{proof}
The proof is given in the Appendix. 
\end{proof}
\section{Non-periodic inspection times} \label{Inspectiontimes}
An inspection policy is implemented in this paper. As we detail in Section \ref{probabilisticmodelling}, non-periodic inspection times are considered. Following \cite{Barker} and \cite{Grall}, a scheduling function $m$ is defined and this function determines the amount of time until the next inspection. In this paper this scheduling function depends on the number of degradation processes in the system and also depends on the degradation levels of these processes. Let $T_i$ be the time of the $i$-th inspection. Then, given $T_i$ and recording the number of degradation processes at time $T_i$ and their degradation levels, next inspection is scheduled at time $T_{i+1}$ given by
$$T_{i+1}=T_i+m({W}(T_i)),  $$
where $m$ is given by (\ref{funcionm}) 
\begin{align*}
m({W}(T_i)) =\max \left(T_r, Tk^{N(T_i)} (1-\max(W(T_i))/M)\right), 
\end{align*}
whenever $\max(W(T_i))< M$. Quantity $T_r$ (with $0 <T_r \leq T$) represents the time required to prepare the maintenance facilities.  
Function $m(\cdot)$ is strictly increasing, with a minimum value equals to $T_r$ and maximum value equals to $T$. The minimum time between inspections prevents from the possibility of an infinite number of inspections on a finite time interval. 

Hence, in an inspection time, the next inspection time scheduling is given as follows.
 \begin{itemize}
\item If, in an inspection time, there is no degradation process in the system $N(T_i)=0$, we get that
$$\max \left(T_r, Tk^{N(T_i)} (1-\max(W(T_i))/M)\right)=T, $$
hence
next inspection is scheduled $T$ units of time after (consistent with the model). 
\item If, in an inspection time, there are $n$ degradation processes present in the system and the maximum of the degradation levels does not exceed $M$, the system is left as it is and next inspection time is scheduled at time 
$$\max \left(T_r, Tk^{n} (1-\max(W)/M)\right). $$
\item 
If, in an inspection time, the maximum of the degradation levels of the processes present in the system exceeds $M$ but it is less than the failure threshold $L$, then the system is preventively replaced by a new one and the inspection times are again scheduled based on this renewal.
\item If, in an inspection time, the system is not working, then the system is correctively replaced by a new one and the inspection times are again scheduled based on this renewal.
\end{itemize}

\begin{example}
Parameters given by \cite{Blain} are used. Defects start following a homogeneous Poisson process with parameter $\lambda=0.0002$ defects per hour. Growth process is modeled using the parameters $\alpha=0.0004 (\mbox{hour})^{-1}$, $\beta=1.5 (\mbox{mm})^{-1}$ and $c=5$. We assume that $M=5$ mm, first inspection is performed at time $T=3000$ hours and the minimum time to perform an inspection is equal to $T_r=1000$ hours. Figure \ref{meantimeinspectionsvaryk} shows the mean time between inspections (in hours) for different values of $k$. This plot has been performed using 30000 realizations in each point. 
\begin{figure}    
        \includegraphics[width=0.9\linewidth]{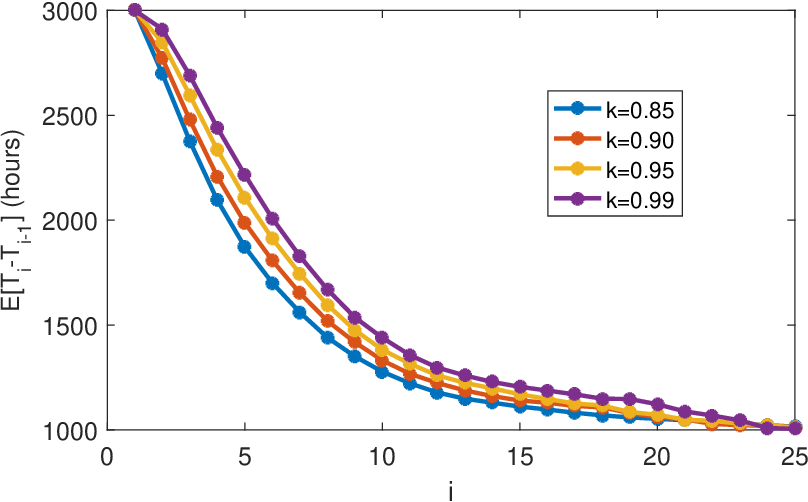}
    \caption{Mean time between inspections for different values of $k$}\label{meantimeinspectionsvaryk}
\end{figure}
\end{example}

\section{Stationary distribution} \label{ConditionalProbability}

Let $\left\{W(t), t \geq 0\right\}$ be the stochastic process that describes the system degradation at time $t$. Then, the process 
 $\left\{W(t), t \geq 0\right\}$ is a regenerative process with regeneration times the replacement times.
 
Furthermore, if the arrival of the degradation processes to the system follows a homogeneous Poisson process, the process  $\left\{W(t), t \geq 0\right\}$ is a semi-regenerative process with semi-regeneration times the inspection times. After each inspection, the system evolution depends on the system state in the inspection times. The process describing the system state after each inspection time 
$$Y_i={W}_{T_i}, \quad i=1, 2, \ldots, $$ is a Markov chain taking values in $[0,M) \times [0,M) \times \ldots [0,M)$ and $Y_0=0$. As the chain $\left\{Y_n, \, n=1, 2, \ldots \right\}$ comes back to 0 almost surely,  the existence of a stationary measure $\pi$ for $\left\{Y_n \right\}$ is proved (\cite{Grall}). This stationary measure is a solution of the invariance equation
 \begin{equation} \label{invarianceequation}
 \pi(\cdot)=\int_{0}^{M}\int_{0}^{M} \ldots \int_{0}^{M} \mathbb{Q}_{x}(\cdot) \pi(dx), 
 \end{equation}
 where $ \mathbb{Q}_x(\cdot)$ stands for the transition kernel of $\left\{Y_n \right\}$ with
 \begin{align} \label{kernel}
 \mathbb{Q}_{x}(dy) &=\mathbb{P}_x(W(T_{1}^{+}) \in dy) =P(W(T_{1}^{+}) \in dy \mid W(0^{+})=x),  
 \end{align}
where $T_1^{+}$ denotes the instant of time just after $T_1$. 
 
In the sequel of this section, we assume that the degradation processes start at random times following a homogeneous Poisson process with parameter 
$$\lambda(t)=\lambda, \quad t \geq 0. $$
Next, an expression for the transition kernel is obtained. To achieve this goal, given $N(0)=n$ with 
$${w}_0={W}(0)=(x_1,x_2,\ldots, x_n), \quad x_i \leq M$$ and $m_0=m(w_0)$ we first compute the following probability function 
\begin{align}   \label{probabilitydensity} 
& \mathbb{P}_{w_0}(dy_1,dy_2, \ldots, dy_{n+p}) = \\ &  \nonumber P({W}(m_0)^{-}\in (dy_1,dy_2, \ldots, dy_{n+p}) \mid {W(0)=w_0}).  
\end{align}

The following cases are envisioned taking into account the values of $n$ and $p$ and $(x_1, x_2, \ldots, x_n)$. 
\begin{itemize}
\item Case 1. $(x_1, x_2, \ldots, x_n)=0$, it means that the system is new, $N(0)=0$. With this initial condition, first inspection is performed at time $T_1=T$ and two scenarios are envisioned at the time of the next inspection $T_1$. 

\begin{enumerate}
\item No degradation process in $[0, T_1]$, that is $N(0)=N(T_1)=0$, $n=0$ and $p=0$.
\item $p$ degradation processes start to degrade in  $[0, T_1]$, that is $N(0)=0$, $N(T_1)=p$, $n=0$ and $p>0$.
\end{enumerate}
\item Case 2. $(x_1, x_2, \ldots, x_n)>0$ $n$ degradation processes at time 0 $N(0)=n$. With this initial condition, first inspection is performed at time $T_1=m(w_0)$ and two scenarios are envisioned at the time of the next inspection ($T_1$)
\begin{enumerate}
\item $N(T_1)=N(0)=n$, no degradation process starts in $[0,T_1]$, hence $p=0$.
\item $n+p$ degradation processes in  $T_1$, that is $N(0)=n$, $N(T_1))=n+p$, $n>0$ and $p>0$.
\end{enumerate}
\end{itemize}

With these four cases in mind, probability (\ref{probabilitydensity}) is computed. 
\begin{itemize}
\item Case 1.a) If $N(0)=N(T_1)=0$, probability (\ref{probabilitydensity}) is given by
\begin{equation} \label{probabilidad1}
P_0(dy)=\delta_0(dy) \exp(-\lambda T). 
\end{equation}
\item Case 1.b) If $N(0)=0$ and $N(T_1)=p>0$, we get that at time $T_1=T$, the degradation level for the $j$-th degradation process just before the inspection time is given by
\begin{align*}
W_j(T) &= \sum_{i=j}^{p-1} X_i^{(j)}(S_{i+1}-S_i)+X_p^{(j)}(T-S_p) \\
W_p(T) & = X_p^{(p)}(T-S_p), 
\end{align*}
where $1 \leq j \leq p-1$. Variable $W_j(T) $ follows a gamma distribution with shape parameter $\alpha_{j,p,T}$ where $\alpha_{j,p,T}$ is given by (\ref{alphajnt}) and scale parameter $\beta$. Hence, probability (\ref{probabilitydensity}) is given by
\begin{align} \label{probabilidad2}
& P_0(dy_1, dy_2, \ldots, dy_p) = \\ & \exp(-\lambda T) \left(\lambda^p \int_{0}^{T}\int_{s_{1}}^{T} \ldots \int_{s_{p-1}}^{T}  \prod_{j=1}^{p}  f_{\alpha_{j,p,T},\beta} (y_{j})  dy_{j} ds_{j} \right) . \nonumber
\end{align}
\item Case 2.a) If $N(0)=n$ and $N(T_1)=n$, it means that in $[0,T_1]$ $n$ degradation processes are developed. The degradation level for the $j$-th degradation process just before the inspection time $T_1=m_0$ is given by
\begin{align*}
W_j(m_0) &= x_j+X_n^{(j)}(m_0), \quad 1 \leq j \leq n, 
\end{align*}
where $X_n^{(j)}(m_0)$ follows a gamma distribution with parameters $\alpha c^{n-1} m_0$ and parameter $\beta$. Hence, probability (\ref{probabilitydensity}) is given by
\begin{align} \label{probabilidad3} 
 P_x(dy_1, dy_2, \ldots, dy_n) =  
 exp(-\lambda m_0)\prod_{i=1}^{n} f_{c^{n-1}\alpha m_0,\beta}(y_i-x_i)dy_i, 
\end{align}
\item Case 2.b) If $N(0)=n$ and $N(T_1)=n+p$, then in $[0,m_0]$ $n+p$ degradation processes are degraded. The degradation levels of these $n+p$ processes just before $T_1$ are equal to
\begin{align*}
W_j(m_0) &= x_j+\sum_{i=n}^{n+p-1} X_i^{(j)}(S_{i+1}-S_i)+X_{n+p}^{(j)}(m_0-S_{n+p}), 
\end{align*}
for $1 \leq j \leq n$ with $S_n=0$ and 
\begin{align*}
W_j(m_0) &= \sum_{i=j}^{n+p-1} X_i^{(j)}(S_{i+1}-S_i)+X_{n+p}^{(j)}(m_0-S_{n+p}) \\
\end{align*}
for $n+1 \leq j \leq n+p$ and
\begin{align*}
W_{n+p}(T_1) &= X_{n+p}^{(n+p)}(m_0-S_{n+p}).  \\
\end{align*}
\end{itemize}
And, 
\begin{align} \label{probabilidad4}
 & P_x(dy_1, dy_2, \ldots, dy_{n+p})   = \exp(-\lambda m_0)  \lambda^p \left(  \right. \\ & \left. \int_{0}^{m_0}\int_{s_1}^{m_0}\ldots \int_{s_{p-1}}^{m_0} g(y_i)dy_i \prod_{j=1}^{p}  f_{\alpha^*_{j,n+p,m_0},\beta}(y_{n+j})dy_{n+j} ds_{n+j} \right),  \nonumber
\end{align}
where 
$\alpha^*_{0,n+p,m_0}$ and $\alpha^*_{j,n+p,m_0}$ are given by (\ref{alphastar1}) and (\ref{alphastar2}) respectively and
$$g(y_i)=\prod_{i=1}^{n} f_{\alpha^*_{0,n+p,m_0},\beta}(y_i-x_i). $$

Next, we can compute the kernel $\mathbb{Q}_x(dy)$ given by (\ref{kernel}). \\

For $x=0$, we get that
\begin{align*}
\mathbb{Q}_0(dy) &= \delta_0(dy) \left(\exp(-\lambda T) +  \sum_{n=1}^{\infty} \int_{M}^{\infty} \int_{M}^{\infty} \ldots \int_{M}^{\infty} P_0(dy_1, dy_2, \ldots, dy_n) \right)
\end{align*}
where $P_0(dy_1, dy_2, \ldots, dy_n)$ is given by (\ref{probabilidad2})
\begin{align*}
\mathbb{Q}_0(dy_1, dy_2, \ldots dy_n) & = \exp(-\lambda T) \left(\lambda^{n}  \int_{0}^{T} \int_{0}^{T} \ldots \int_{s_{n-1}}^{T} \prod_{j=1}^{n}f_{\alpha_{j,n,T}, \beta}(y_j) \right) dy_j, 
\end{align*} 
for $y_j < M$ for $j=1, 2, \ldots, n$.  \\

For $x=(x_1,x_2, \ldots, x_n)$ we get that
\begin{align*}
\mathbb{Q}_x(dy) &= \delta_0(dy) \left(\int_{M}^{\infty} \int_{M}^{\infty} \ldots \int_{M}^{\infty} P_x(dy_1, dy_2, \ldots, dy_n) \right. \\  & \left. + \sum_{p=1}^{\infty}\int_{M}^{\infty} \int_{M}^{\infty} \ldots \int_{M}^{\infty} P_x(dy_1, dy_2, \ldots, dy_{n+p}) \right)
\end{align*}
with $P_x(dy_1, dy_2, \ldots, dy_n)$ and $P_x(dy_1, dy_2, \ldots, dy_{n+p})$ given by (\ref{probabilidad2}) and (\ref{probabilidad1}). Finally,  
\begin{align*}
& \mathbb{Q}_x(dy_1, dy_2, \ldots dy_n) =P_x(dy_1, dy_2, \ldots, dy_n) \\
& \mathbb{Q}_x(dy_1, dy_2, \ldots dy_n, dy_{n+1}, \ldots, dy_{n+p})=P_x(dy_1, dy_2, \ldots, dy_{n+p})
\end{align*}
where $P_x(dy_1, dy_2, \ldots, dy_n)$ and $P_x(dy_1, dy_2, \ldots, dy_{n+p})$ are given by (\ref{probabilidad3}) and (\ref{probabilidad4}).

Following the assumptions of this paper, the probability distribution of the semi-regenerative process $\pi$ fulfills Equation (\ref{invarianceequation}). The evaluation of $\pi$ is really tricky and requires to solve a multi-dimensional integral equation. To analyze the optimal maintenance strategy, the distribution of $\pi$ is simulated in Section \ref{numericalexamples}.

\section{Optimal maintenance policy} \label{optimalpolicy}
A maintenance policy is analyzed in this section. The long-run expected cost per unit time 
\begin{equation} \label{objectivecostfunction}
C_{\infty}=\lim_{t \rightarrow \infty}\frac{C(t)}{t}, 
\end{equation}
where $C(t)$ denotes the cumulative cost incurred in the time interval $[0,t]$ is chosen as objective cost function. The implementation of this objective cost function requires the evaluation of the stationary laws of the maintained system.

Let $R_1, R_2, \ldots, $ be the successive system replacements. The long-run expected cost per unit time is given by 
\begin{equation}
EC_{\infty}=\lim_{t \rightarrow \infty}  \frac{\mathbb{E}(C(t))}{t}=\frac{\mathbb{E}(C(R_1))}{\mathbb{E}(R_1)}, 
\end{equation}
that is, the asymptotic behaviour is focused on the first renewal. However, when the starting points of the degradation processes follow a homogeneous Poisson process, we can take advantage of the properties of the semi-regenerative process theory since $\left\{W(t), t \geq 0\right\}$ is a semi-regenerative process with semi-regeneration times the inspection times. 
The study of the asymptotic behaviour of $\left\{W(t), t \geq 0\right\}$ can be focused on a single semi-regenerative cycle (also known as Markov renewal cycle) defined by two successive inspection times and the long-run maintenance cost rate given by Equation (\ref{objectivecostfunction}) can be expressed as
\begin{equation}
C_{\infty}=\frac{\mathbb{E}_{\pi}[0,S_1]}{\mathbb{E}_{\pi}[0,S_1]}, 
\end{equation} 
where $\mathbb{E}_{\pi}$ denotes the $s$-expectation with respect to the stationary measure $\pi$ and $S_1$ denotes the length of the first Markov renewal cycle. Stationary measure fulfills Equation (\ref{invarianceequation}). The use of semi-regenerative processes to analyze the optimal maintenance policy is not new in the literature and they have been used by \cite{Berenguer} and \cite{Mercier} among others.

Integrating $\pi$, some measures related to the system can be calculated. So, 
denoting by $\mathbb{E}_{\pi}[N_p(T_1)]$ the number of preventive replacements in a semiregeneration cycle we get that
\begin{align} \label{preventivesemi}
 \mathbb{E}_{\pi}[N_p(T_1)] & =\pi(0) \left(F_{\sigma_M|0}(T)-F_{\sigma_L|0}(T) \right) \\ &+\int_{0}^{M}\int_{0}^{M} \ldots \int_{0}^{M} \pi({dx})\left(F_{\sigma_M|x}(m(x))-F_{\sigma_L|x}(m(x))\right), 
\end{align}
where $\bar{F}_{\sigma_M|x}(t)$ ($\bar{F}_{\sigma_L|x}(t)$) denotes the probability that, starting with $W_0=x$,  the maximum of the degradation levels does not exceed $M$ ($L$) at time $t$ 
$$\bar{F}_{\sigma_M|x}(t)=P_x(\max(W(t)) \leq M). $$ This function was calculated in Lemma \ref{survivalnonull} for $x\neq 0$ and in Lemma \ref{lemasurvival} for $x=0$. 

Denoting by $\mathbb{E}_{\pi}[N_c(T_1)]$ the number of corrective replacements in a semiregeneration cycle we get that
\begin{align}\label{correctivesemi}
 \mathbb{E}_{\pi}[N_c(T_1)] =\pi(0) F_{\sigma_L|0}(T) 
 +\int_{0}^{M}\int_{0}^{M} \ldots \int_{0}^{M} \pi({dx})F_{\sigma_L|x}(m(x)), 
\end{align}\label{downsemi}
with expected down time in a semiregeneration cycle given by
\begin{align}
 \mathbb{E}_{\pi}[d(T_1)] &=\pi(0) \int_{0}^{T} F_{\sigma_L|0}(u)du \\
 &+\int_{0}^{M}\int_{0}^{M} \ldots \int_{0}^{M} \pi({dx})\int_{0}^{m(x)}F_{\sigma_L|x}(u) du.  
\end{align}
Finally, the expected time of a semiregeneration cycle is given by
\begin{align}\label{expectedsemi}
 \mathbb{E}_{\pi}[T_1] = T \pi(0) + \int_{0}^{M}\int_{0}^{M} \ldots \int_{0}^{M} \pi(dx)m(x). 
\end{align}
Similar to the reasoning of \cite{Grall}, using $\pi$, the long-run expected cost per unit time can be expressed  as follows
\begin{align} \label{Cinfinito}
C_{\infty}(T,M) &= \frac{C_i \mathbb{E}_{\pi}[N_i(T_1)]}{\mathbb{E}_{\pi}[T_1]} + \frac{C_p \mathbb{E}_{\pi}[N_p(T_1)]}{\mathbb{E}_{\pi}[T_1]} + \frac{C_c \mathbb{E}_{\pi}[N_c(T_1)]}{\mathbb{E}_{\pi}[T_1]} + \frac{C_d \mathbb{E}_{\pi}[d(T_1)]}{\mathbb{E}_{\pi}[T_1]},  
\end{align}
where $\mathbb{E}_{\pi}[N_p(T_1)]$, $\mathbb{E}_{\pi}[N_c(T_1)]$, $\mathbb{E}_{\pi}[d(T_1)]$ and $\mathbb{E}_{\pi}[T_1]$ are given by (\ref{preventivesemi}:\ref{expectedsemi}). Finally, the number of inspections in a semi-regeneration cycle is given by
$$\mathbb{E}_{\pi}[N_i(T_1)] =1. $$
The optimization problem is formulated as
$$C(T_{opt}, M_{opt})=\inf\left\{C_{\infty}(T,M), \quad T \geq T_r, \quad 0 \leq M \leq L \right\}$$
where $C_{\infty}(T,M)$ is given by Equation (\ref{Cinfinito}), $M$ denotes the preventive threshold and $T$ denotes the time to the first inspection.

\section{Numerical examples} \label{numericalexamples}
In this section, some numerical examples are developed. To simplify the calculus, we assume that the system is subject to three degradation processes. These degradation processes arrive to the system according to a homogeneous Poisson process with parameter $\lambda=1$ defects per unit time. They grow with increments given by (\ref{incrementsame}) with parameters $\alpha=1$ time units$^{-1}$, $\beta=1$ ({degradation units})$^{-1}$
and $c=1.01$.  We assume that the system fails when the maximum of the degradation levels exceeds $L=8$ (degradation units). Inspections are scheduled according to a function $m(\cdot)$ given by (\ref{funcionm}) with $k=0.95$ and minimum time between inspections equals to $T_r=1$ time units. Each inspection incurs a cost of $C_i=50$ monetary units. We assume that a preventive replacement is performed when the system is working in an inspection time but the maximum of the degradation level of the processes exceeds $M$ with a cost of $C_p=300$ monetary units. If the system is failed in an inspection time, a corrective replacement is performed with an additional cost of $C_c=400$ monetary units. Between inspections, if the system is down, it incurs in a cost of $C_d=100$ monetary units per unit time. 

Using the stationary measure $\pi$, the long-run expected cost per unit time given by (\ref{Cinfinito}) is calculated. For that, the estimation of $\pi$ is computed. Notice that the distribution $\pi$ is composed of a discrete mass in $0$ and of a continuous part.  

To simulate $\pi(0)$, starting from $W_0=0$, 10000 realizations of the Markov chain $Y_n=W(T_n)$ for $n=1, 2, \ldots, 10000$ are computed and the estimation of $\pi(0)$ is computed as the frequency of times that the Markov chain visits the state 0 in these 10000 realizations. 

For example, we estimate the stationary distribution for the following example. We consider a dependent NHPP-GP model with the following parameters ($\lambda=1$, $\alpha=1$, $\beta=1$, $c=1.01$). The preventive threshold is given by $M=2$ degradation units. We assume that $T=3$ and $T_r=1$ (time units). Given $W_0=0$, 10000 realizations of the Markov chain are computed. From these realizations, 6939 times the Markov chain visits state $0$, hence
$$\widehat{\pi}_0=\frac{6939}{10000}=0.6939. $$
In these 10000 realizations, in 762 times, there is just one degradation process just after the inspection time. In 928 times, there are two degradation processes just after the inspection time and, in 1371 times, there are three degradation processes. 
Figure \ref{hist} shows the histogram for the degradation level just after the inspection time in these 762 times. Figure \ref{histogramados} shows the bivariate histogram for the degradation level just after the inspection time in these 928 times.

\begin{figure}
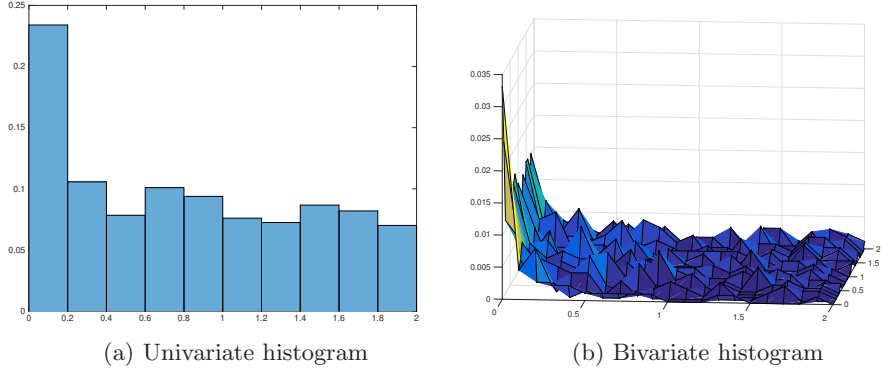

    \begin{subfigure}{.5\textwidth}
        \includegraphics[width=0.9\linewidth]{histogramauno.eps}
        \caption{Univariate histogram}\label{hist}
    \end{subfigure}
    \begin{subfigure}{.5\textwidth}
        \includegraphics[width=0.9\linewidth]{histogramados.eps}
        \caption{Bivariate histogram}\label{histogramados}
    \end{subfigure}
    \caption{Histograms for a sample of $\pi$}\label{histograms}
\end{figure}

%
%

Figure \ref{costeCM} shows the objective cost function given by Equation (\ref{Cinfinito}) versus $M$ and $T$ for this model. To compute the values of (\ref{Cinfinito}), 10000 replications of the Markov chain have been computed. With this distribution, stationary distribution $\pi$ is estimated. Using this estimation of $\pi$, the s-expectations $\mathbb{E}_{\pi}$ are computed using Monte-Carlo simulation.  For $T$, 6 values from 1 to 15 have been considered and for $M$, 5 values from 0.5 to 8 have been considered. For the calculus of  $\mathbb{E}_{\pi}$, 10000 values have been simulated for each combination of points and for each value of the estimation of $\pi$. 

\begin{figure}
	\begin{center}
		\includegraphics[width=0.7\textwidth]{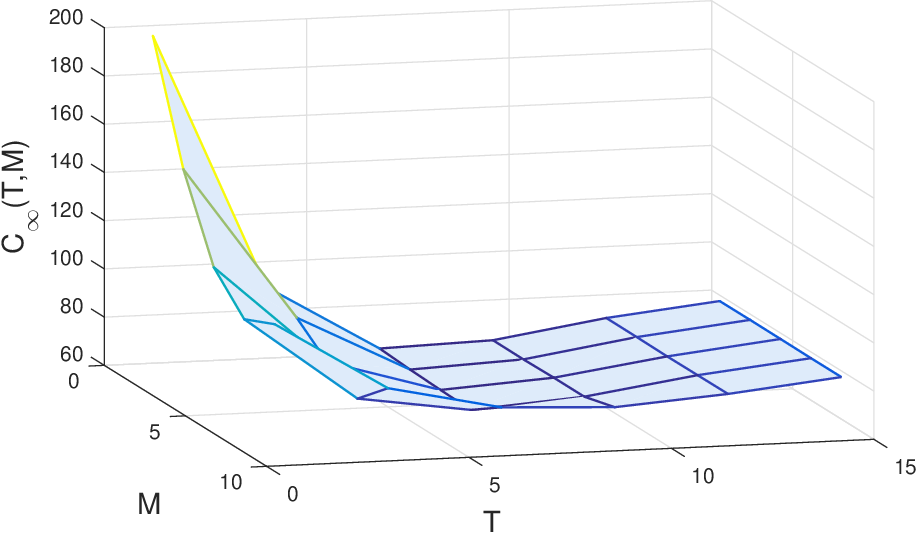}
		\caption{Objective cost function} \label{costeCM}
	\end{center}
\end{figure}

Considering this dataset, the optimal maintenance policy 
\begin{equation} \nonumber
C(T_{opt},M_{opt})=\min\left\{C(T,M), \, T_r \leq T, \quad 0 \leq M \leq L \right\},
\end{equation}
is obtained for $T_{opt}=6.6$ units of time and $M_{opt}=6.1250$ degradation units with a value of $C(T_{opt}, M_{opt})=62.2509$ monetary units per unit time. 

For the previous example, the parameter $k$ is fixed. However, we can analyze the optimal maintenance policy considering three parameters to optimize: $T$, $M$ and $k$. Hence, the objective cost function is equal to
\begin{equation} \nonumber
C(T_{opt},M_{opt}, k_{opt})=\min\left\{C(T,M,k), \, T_r \leq T, \quad 0 \leq M \leq L, \quad k \leq 1 \right\}. 
\end{equation}
As example of the three-dimensional optimization problem, a dependent NHPP-GP model with parameters $\lambda=0.75, \alpha=1, \beta=1, c=1.1$ is considered. The system fails when a degradation process exceeds $L=8$ degradation units. The minimum time between inspections is equal to $T_r=1$ time units. The sequence of costs is the following: the cost for a preventive replacement is equal to 300 monetary units, the cost for a corrective replacement is equal to 400 monetary units, the cost for an inspection is equal to 40 monetary units and the downtime cost is equal to $C_d=80$ monetary units per unit time. Figure \ref{costefixedk} shows the minimum expected cost rate versus $k$.  

 \begin{figure}
	\begin{center}
		\includegraphics[width=0.7\textwidth]{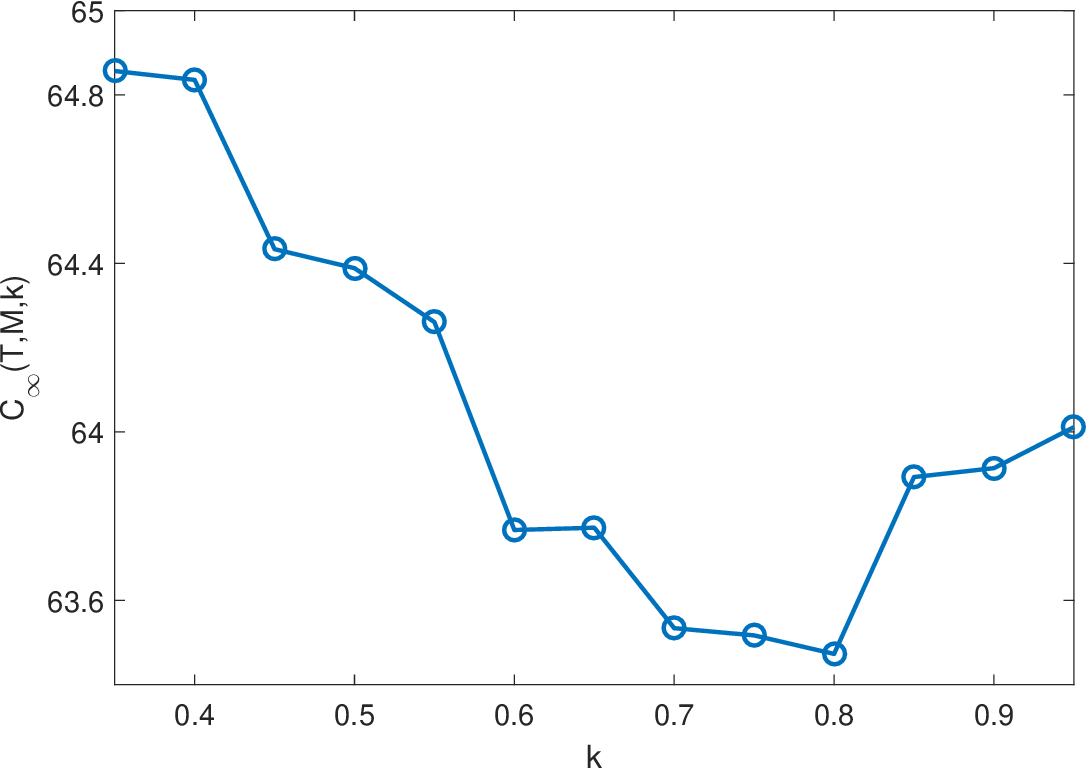}
		\caption{Minimum expected cost rate versus $k$} \label{variak}
	\end{center}
\end{figure}
As we can see, the optimal value is obtained for $k_{opt}=0.80$. For fixed $k=80$, Figure \ref{costefixedk} shows $C_{\infty}$ versus $T$ and $M$. This figure has been computed considering 6 values for $T$ from 1 to 10 and 5 values for $M$ from 0.5 to 8. The minimum values for the cost obtained for $T_{opt}$=6.4 and $M_{opt}=4.25$ with an expected cost equals to $C(T_{opt},M_{opt}, k_{opt})=63.4729$ monetary units per unit time. 
\begin{figure}
	\begin{center}
		\includegraphics[width=0.7\textwidth]{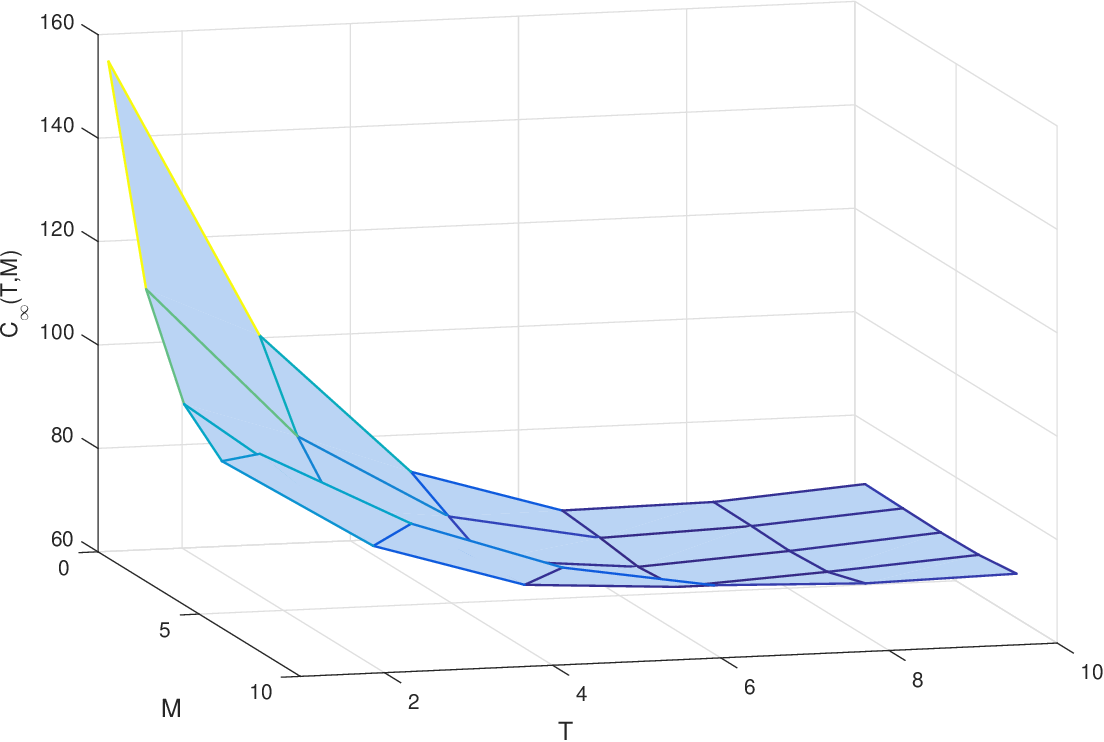}
		\caption{Expected cost rate for $k=0.80$} \label{costefixedk}
	\end{center}
\end{figure}

\section{Conclusions and further works}\label{conclusions}
This paper analyzes the system reliability for a system subject to many defects modelled using a NHPP-GP model. NHPP-GP model has been used in the literature to model, for example, the Stress Corrosion Cracking in an offshore wind turbine and for components of nuclear power plants. The novelty of this work is given by the assumption of dependence between the defects in a NHPP-GP model. We assume that the arrival of a new defect to the system triggers the degradation rate of the defects present in the system. With a system subject to different defects and modelled using a dependent NHPP-GP model, non periodic inspection times are scheduled. These inspections times are chosen according to the number of degradation processes present in the system and according to their degradation levels. An optimal maintenance policy is also analyzed considering techniques of semi-regenerative processes.

In this work, different defects start at random times and next propagate dependently according to a degradation law. It could be the case where the defects are not adjacent (\cite{Bordes}). However, when the defects are adjacent, they can coalesce to form a single critical defect with catastrophic consequences. It is well known that the coalescence of cracks plays a critical role in the growth process. Hence, an extension of this paper would be to consider a dependent NHPP-GP with coalescence between defects. 

In this work, we have considered the same degradation intrinsic process for all the degradation processes, that is, gamma increments. 
However, the property of equal growth process for all the defects may be violated in real world. For example, there are different degradation processes (cracking, deformation) that can develop on the pavement and whose degradation mechanism is different. An extension of this work would be to consider different degradation mechanisms between the processes and the implementation of a dependence structure between them. 

\section*{Acknowledgments}
This work is supported by Gobierno de Extremadura,
Spain (Project IB18073). For the first author,
this research was also supported by Ministerio de Econom{\'i}a y
Competitividad, Spain (Project PGC2018-094964-B-I00). 
 
\section*{Appendix}

\subsection*{Proof of Lemma \ref{generalexpectations}}
Taking expectations in each summation in (\ref{Wj}), we get that
\begin{align} 
& \mathbb{E}\left[\frac{\alpha c^{n-1} (S_{n+1}-S_n)}{\beta} \mathbf{1}_{\left\{S_{n+1}\leq t\right\}} \right] =    \frac{\alpha c^{n-1}}{\beta}\idotsint_{E_{n+1}} g(s_{n},s_{n+1}) \prod_{i=1}^{n+1} \lambda(s_i) ds_i  \label{int1}
\end{align}
where
\begin{eqnarray*}
E_{n+1}=\left\{(s_1,s_2,\ldots, s_n,s_{n+1}), \quad   0<s_1<s_2<\ldots<s_n<s_{n+1}<t\right\} 
\end{eqnarray*}
and
$$g(s_{n},s_{n+1})=(s_{n+1}-s_n)\exp\left\{-\Lambda(s_{n+1})\right\}. $$
Integrating (\ref{int1}), we obtain 
\begin{align}  \nonumber
 \mathbb{E}\left[(S_{n+1}-S_n) \mathbf{1}_{\left\{S_{n+1}\leq t\right\}} \right] &= \int_{0}^{t}\int_{0}^{s_{n+1}} g(s_n,s_{n+1})  \frac{\Lambda(s_n)^n}{n!} \prod_{i=n}^{n+1}\lambda(s_i)ds_i \\  \label{W11}
  &=  \int_{0}^{t} \frac{\Lambda(s)^{n+1}}{(n+1)!}\left(\exp(-\Lambda(s))-\exp(-\Lambda(t)\right) ds.  
  \end{align}
On the other hand, 
\begin{align} \nonumber
 \mathbb{E}\left[(t-S_n) \mathbf{1}_{\left\{S_{n}\leq t\leq S_{n+1}\right\}} \right] 
& = \idotsint_{E_{n}}(t-s_n)\exp\left\{-\Lambda(t)\right\}\prod_{i=1}^{n} \lambda(s_i) ds_i \\  \nonumber
& = \exp\left\{-\Lambda(t)\right\} \int_{0}^{t}(t-s)\frac{\Lambda(s)^n}{n!} ds \\    \label{W22}
& = \exp\left\{-\Lambda(t)\right\}   \int_{0}^{t}\frac{\Lambda(s)^{n+1}}{(n+1)!} ds  
\end{align} 
hence summing (\ref{W11}) and (\ref{W22}), (\ref{expectationW}) is fulfilled. 
\subsection*{Proof of Lemma \ref{exponentialexpectations}}
Induction is used to prove this lemma. For $n=1$, (\ref{intexp}) is fulfilled. 

We suppose that (\ref{intexp}) is true for $n$ and we have to prove that (\ref{intexp}) is true for $n+1$. So, we get that
\begin{align*}
 \int_{0}^{t}s^{n+1}\exp(-\lambda s)ds &= \frac{-t^{n+1}}{\lambda}\exp(-\lambda t)  + \frac{(n+1)}{\lambda}\int_{0}^{t}s^n \exp(-\lambda s)ds. 
\end{align*}
Using that (\ref{intexp}) is true for $n$, we get that
\begin{align*}
 \int_{0}^{t}s^{n+1}\exp(-\lambda s)ds &= \frac{-t^{n+1}}{\lambda}\exp(-\lambda t)  + \frac{(n+1)!}{\lambda^{n+1}}\left(1-\sum_{k=0}^{n} \exp(-\lambda t) \frac{(\lambda t)^k}{k!}\right) \\
& = \frac{(n+1)!}{\lambda^{n+2}}\left(1-\sum_{k=0}^{n+1} \exp(-\lambda t) \frac{(\lambda t)^k}{k!}\right), 
\end{align*}
and the result holds. 
\subsection*{Proof of Lemma \ref{lemasurvival}}
To compute the survival function, the number of defects present in the system at time $t$ is considered. 
\begin{align*}
 \bar{F}_{\sigma_{z}|0}(t) &= P(\sigma_z \geq t \mid W_0=0) \\
 &= P(max(W(t)) \leq  z \mid W_0=0) \\
& = \sum_{n=0}^{\infty} P\left(max(W(t)) \leq  z, \, N(t)=n \mid W_0=0\right). 
\end{align*}
It is trivial that  
\begin{eqnarray*} \label{Z0}
P(\sigma_z \geq t, \, N(t)=0 \mid W_0=0) = \exp(-\Lambda(t)). 
\end{eqnarray*}
For $N(t)=1$, we get that
\begin{align} \nonumber
 P(\sigma_z \geq t, \, N(t)=1 \mid W_0=0)  & = P(S_1 \leq t < S_2, \quad X_1^{(1)}(t-S_1) \leq z \mid W_0=0)  \nonumber \\
& =  \exp\left(-\Lambda(t)\right) \int_{0}^{t}\lambda(s) F_{\alpha(t-s),\beta}(z)ds,  \label{Z1}
\end{align}
where ${F}_{\alpha(t-s),\beta}$ denotes the distribution function of a gamma variable with parameters $\alpha(t-s)$ and $\beta$. 

In a general setting, 
\begin{align}  \nonumber
& P(\sigma_z \geq t \mid W_0=0) =\sum_{n=0}^{\infty} P(\sigma_z \geq t, N(t)=n \mid W_0=0) \\ \nonumber & = P(S_1 >t)+\sum_{n=1}^{\infty} P(S_n \leq t < S_{n+1}, \, max(W(t))<z) \\ \label{survival1}
&= \exp\left(-\Lambda(t)\right) \left(1+ \sum_{n=1}^{\infty} \int_{0}^{t}\int_{s_1}^{t} \ldots \int_{s_{n-1}}^{t} \prod_{i=1}^{n} \lambda(s_i) F_{\alpha_{i,n,t},\beta}(z) ds_i \right) 
\end{align}
with $s_0=0$ and where $F_{\alpha_{i,n,t}, \beta}$ denotes the distribution function of a gamma variable with parameters $\alpha_{i,n,t}$ given by (\ref{alphajnt})and $\beta$.

\subsubsection*{Proof of Lemma \ref{boundsurvival}}
Next, Lemma \ref{lemma1} is used to obtain bounds for the distribution function. 
For that, we consider the term $F_{\alpha_{i,n,t}}$ given in (\ref{Falpha}). Function  $F_{\alpha_{i,n,t}}$ corresponds to the distribution function of a gamma with parameters $\alpha_{i,n,t}$ and $\beta$ where
$$\alpha_{i,n,t}=\sum_{z=i}^{n-1}\alpha c^{z-1}(s_{z+1}-s_z)+c^{n-1}\alpha (t-s_n). $$
We get that the parameters $\alpha_{i,n,t}$ fulfill 
\begin{equation} \label{alfaparameters}
\alpha(t-s_i) \leq \alpha_{i,n,t} \leq   c^{n-1}\alpha (t-s_i), \quad t \geq s_i.  
\end{equation}
Applying Lemma \ref{lemma1} and inequality (\ref{alfaparameters}), due to 
likelihood ratio order implies the usual stochastic order, we get that
\begin{equation} \label{Falpha}
F_{\alpha c^{n-1}(t-s_i), \beta}(z) \leq F_{\alpha_{i,n,t},\beta}(z) \leq F_{\alpha(t-s_i),\beta}(z). 
\end{equation}
Replacing (\ref{Falpha}) in (\ref{funcionG}) , we get that
\begin{align} \nonumber
 \bar{F}_{\sigma_{z}|0}(t) & \leq exp(-\Lambda(t))\sum_{n=0}^{\infty} \frac{\left(\displaystyle{\int_{0}^{t}} \lambda(u)F_{\alpha(t-u),\beta}(z) du\right)^n}{n!} \\ \label{bound1}
& = exp\left(-\int_{0}^{t}\lambda(u)\bar{F}_{\alpha(t-u),\beta}(z) du\right), 
\end{align}
or, equivalently, 
$$ \bar{F}_{\sigma_{z}|0}(t) \leq  exp\left(-\int_{0}^{t}\lambda(u)F_{Y_z}(t-u) du\right), $$
where $F_{Y_z}$ denotes the distribution function of the first hitting time to the level $z$ for a homogeneous gamma process with parameters $\alpha$ and $\beta$
$$F_{Y_z}(t-u)=\frac{\displaystyle{\int_{z \beta}^{\infty}x^{\alpha (t-u)-1}e^{-\beta x} dx}}{\displaystyle{\int_{0}^{\infty}x^{\alpha (t-u)-1}e^{-\beta x} dx}}. $$
On the other hand, considering the lower bound of (\ref{Falpha}), we get that
\begin{align*}
 \bar{F}_{Y_{z}}(t)  & \geq  exp(-\Lambda(t))\left(1+\sum_{i=1}^{\infty} \frac{\left(\int_{0}^{t}\lambda(u)F_{c^{i-1} \alpha(t-u),\beta}(z)du\right)^i}{i!}\right),
\end{align*}
or equivalently, 
\begin{align*}
 \bar{F}_{\sigma_{z}|0}(t)   & \geq exp(-\Lambda(t))\left(1+\sum_{i=1}^{\infty} \frac{\left(\int_{0}^{t}\lambda(s) F_{Y_z}^{i-1} (t-s)ds\right)^i}{i!}\right), 
\end{align*}
where $\bar{F}_{Y_z}^{(i-1)}$ denotes the survival function for the first hitting time to the level $z$ for a homogeneous gamma process with parameters $c^{i-1}\alpha$ and $\beta$, that is, 
$$F_{Y_z}^{(i-1)}(t-s)=\frac{\displaystyle{\int_{z\beta}^{\infty}u^{c^{i-1}\alpha(t-s)-1} e^{-u} du}}{\displaystyle{\int_{0}^{\infty}u^{c^{i-1}\alpha(t-s)-1} e^{-u} du}}.$$

\subsection*{Proof of Lemma \ref{survivalnonull}}
Given $W_0=x$, we get that
\begin{align*}
 \bar{F}_{\sigma_{z}\mid x}(t) &= P(\sigma_z \geq t \mid W_0=x) \\
 &= P(max(W(t)) \leq  z \mid W_0=x) \\
& = \sum_{p=0}^{\infty} P\left(max(W(t)) \leq  z, \, N(t)=n+p \mid W_0=x\right). 
\end{align*}
If $N(t)=0$, $n$ degradation processes degrade in $[0,t]$ following a gamma process with parameters $c^{n-1} \alpha$ and $\beta$. Hence, 
$$P(\sigma_z \geq t, N(t)=n \mid W_0=x)=exp(-\lambda(t))\prod_{i=1}^{n}F_{\alpha c^{n-1}t, \beta}(z-x_i). $$
It $N(t)=n+p$, let $S_{n+1},S_{n+2}, \ldots, S_{n+p}$ be the starting points of the $p$ additional processes in  $[0,t]$. The degradation levels of the $n+p$ degradation processes at time $t$ is given by
\begin{align*}
W_i(t) &= x_i+X_n^{(i)}(S_{n+1})+X_{n+1}^{(i)}(S_{n+2}-S_{n+1})+\ldots+X_{n+p}^{(i)}(t-S_{n+p}),   
\end{align*}
for $1 \leq i \leq n$, 
\begin{align*}
W_{n+j}(t) &= \sum_{i=j}^{p-1}X_{n+i}^{(n+j)}(S_{n+i+1}-S_{n+i})+X_{n+p}^{(n+j)}(t-S_{n+p}), 
\end{align*}
for $n+1 \leq j \leq n+p-1$ and 
\begin{align*}
W_{n+p}(t) &= X_{n+p}^{(n+p)}(t-S_{n+p}). 
\end{align*}
So, for a realization $(S_{n+1}, S_{n+2}, S_{n+p})=(s_{n+1}, s_{n+2}, s_{n+p})$, we get that $W_i(t)$ follows a gamma distribution with parameters
\begin{equation} \label{alphastar1}
\alpha_{0,n+p,t}^*=\alpha c^{n-1}s_{n+1}+\alpha c^{n}(s_{n+2}-s_{n+1})+\ldots+\alpha c^{n+p-1}(t-s_{n+p})
\end{equation}
and $\beta$
for $1 \leq i \leq n$, $W_{n+j}(t)$ follows a gamma distribution with parameters
\begin{equation} \label{alphastar2}
\alpha_{n+j,n+p,t}^*=\sum_{i=j}^{p-1}\alpha c^{n+i-1}(s_{n+i+1}-s_{n+i})+\alpha c^{n+p-1}(t-s_{n+p})
\end{equation}
and $\beta$ for $1 \leq j \leq p-1$
and finally $W_{n+p}(t)$ follows a gamma distribution with parameters $\alpha c^{n+p-1}(t-s_{n+p})$ and $\beta$.

Hence, 
\begin{align*}
& P(\sigma_z \geq t, N(t)=n+p \mid W(0)=x)= exp(-\lambda(t)) \left( \right. \\ 
& \left.   \int_{0}^{t}\int_{s_{n+1}}^{t} \ldots \int_{s_{n+p-1}}^{t} \prod_{j=1}^{p} \lambda(s_{n+j})\prod_{i=1}^{n} F_{\alpha^*_{0,n+p,t},\beta}(z-x_i)\prod_{j=1}^{p}F_{\alpha^*_{n+j,n+p,t},\beta}(z) \right). 
\end{align*}

Finally, summing all the terms, we get that  
\begin{align*}
 & \bar{F}_{\sigma_{z} \mid x}(t) = \sum_{p=0}^{\infty}   P(\sigma_z \geq t, N(t)=n+p \mid W(0)=x) \\
 &= exp(-\lambda(t))\left( \prod_{i=1}^{n}F_{\alpha c^{n-1}t, \beta}(z-x_i) \right. \\ 
 &+ \left. \sum_{p=1}^{\infty} \int_{0}^{t}\int_{s_{n+1}}^{t} \ldots \int_{s_{n+p-1}}^{t} \prod_{j=1}^{p} \lambda(s_{n+j})\prod_{i=1}^{n} F_{\alpha^*_{0,n+p,t},\beta}(z-x_i)\prod_{j=1}^{p}F_{\alpha^*_{n+j,n+p,t},\beta}(z) \right). 
\end{align*}







\end{document}